\documentclass[a4paper,10pt,leqno]{article}
\usepackage{amsmath,amssymb,amscd,amsthm}
\usepackage{epic,eepic}
\usepackage[dvips]{graphics}
\pagestyle{plain}

\setlength{\unitlength}{1cm}

\DeclareFontEncoding{OT2}{}{}
\DeclareFontSubstitution{OT2}{wncyr}{m}{n}
\DeclareSymbolFont{cyss}{OT2}{wncyss}{m}{n}
\DeclareSymbolFont{cyr}{OT2}{wncyr}{m}{n}
\DeclareMathSymbol{\sh}{\mathbin}{cyss}{`x}

\newcommand{\id}{\operatorname{id}}

\newcommand{\End}{\operatorname{End}}
\newcommand{\ad}{\operatorname{ad}}
\newcommand{\PGL}{\operatorname{PGL}}
\newcommand{\cMPL}{\mathcal{M\!P\!L}}
\newcommand{\cIDX}{\mathcal{I\!D\!X}}

\newcommand{\Li}{\operatorname{Li}}

\renewcommand{\Pr}{\operatorname{Pr}}

\newcommand{\C}{{\mathbf C}}
\newcommand{\R}{{\mathbf R}}
\newcommand{\Q}{{\mathbf Q}}
\newcommand{\Z}{{\mathbf Z}}

\newcommand{\bP}{{\mathbf P}}
\newcommand{\bunit}{{\mathbf I}}
\newcommand{\bnull}{{\mathbf 1}}
\newcommand{\F}{\mathbf F}

\newcommand{\bk}{{\mathbf k}}
\newcommand{\bl}{{\mathbf l}}

\newcommand{\fd}{{\mathfrak d}}
\newcommand{\fg}{{\mathfrak g}}

\newcommand{\fH}{{\mathfrak H}}

\newcommand{\fI}{{\mathfrak I}}
\newcommand{\fJ}{{\mathfrak J}}

\newcommand{\fX}{{\mathfrak X}}

\newcommand{\fM}{{\mathfrak M}}
\newcommand{\fA}{{\mathfrak A}}

\newcommand{\cB}{{\mathcal B}}
\newcommand{\cL}{{\mathcal L}}
\newcommand{\cM}{{\mathcal M}}
\newcommand{\cN}{{\mathcal N}}
\newcommand{\cU}{{\mathcal U}}
\newcommand{\cW}{{\mathcal W}}
\newcommand{\hcL}{{\widehat{\mathcal L}}}
\newcommand{\hG}{\widehat{G}}

\newcommand{\ds}{\displaystyle}

\newtheorem{thm}{Theorem}
\newtheorem{cor}[thm]{Corollary}
\newtheorem{prop}[thm]{Proposition}
\newtheorem{lem}[thm]{Lemma}

\title{KZ equation on the moduli space ${\mathcal M}_{0,5}$ and\\ the harmonic product of multiple polylogarithms}
\def\subjclass{Primary 32G34; Secondary 11G55,11M06;}
\markright{KZ equation and the harmonic product of MPLs} 

\author{OI, Shu and UENO, Kimio}
\date{}
\def\address{\noindent
{\bf OI, Shu.}\\
Department of Mathematics, School of Fundamental Sciences and Engineering, Faculty of Science and Engineering, Waseda university. 3-4-1, Okubo, Shinjuku-ku, Tokyo 169-8555, Japan.\\
{\it e-mail:} {\tt shu\_oi@toki.waseda.jp}\\[1\baselineskip]
{\bf UENO, Kimio}\\
Department of Mathematics, School of Fundamental Sciences and Engineering, Faculty of Science and Engineering, Waseda university. 3-4-1, Okubo, Shinjuku-ku, Tokyo 169-8555, Japan.\\
{\it e-mail:} {\tt uenoki@waseda.jp}
}

\begin{document}
\maketitle
\insert\footins{\footnotesize 2010 {\it Mathematics Subject Classification.} \subjclass}

\begin{abstract}
In this article, we derive a system of functional relations called the generalized harmonic product relations for hyperlogarithms on the moduli space ${\mathcal M}_{0,5}$ and show that the relations contain the harmonic product of multiple polylogarithms. The generalized harmonic product relations are equivalent to the relations which come from two decompositions of the fundamental solution normalized at the origin of the KZ equation on ${\mathcal M}_{0,5}$.\\
\end{abstract}

\section{Introduction}
The Knizhnik-Zamolodchikov equation (the KZ equation, KZE, for short) is a differential equation on the moduli space $\cM_{0,n}$ whose coefficients are generators of the infinitesimal pure braid Lie algebra of $\cM_{0,n}$.

Our aim is deriving the harmonic product of multiple zeta values (MZVs) through studies on the KZ equation on $\cM_{0,5}$, which is regarded as a differential equation of two variables. Deligne-Terasoma \cite{DT} and Furusho \cite{F} succeeded in deriving the harmonic product of MZVs from relations of Drinfel'd associator through the viewpoint of arithmetic geometry. Drinfel'd associator appears as a connection matrix between certain solutions of KZE.

In this paper we consider KZE on $\cM_{0,5}$ and its fundamental solutions. From the viewpoint of differential equations and iterated integrals, we introduce the generalized harmonic product relations. They are functional relations for hyperlogarithms which contain the harmonic product of multiple polylogarithms (MPLs). 

The central idea of our method is the decomposition of the infinitesimal pure braid Lie algebra $\fX$, the reduced bar algebra $\cB$, which is a dual Hopf algebra of $\cU(\fX)$, and fundamental solutions of KZE on the moduli space $\cM_{0,5}$. Brown \cite{B} mentioned to the decomposition of the KZ equation on the moduli space, however, we will consider such decompositions concretely from the topological and algebraic viewpoints.

Since the harmonic product of MPLs yields the harmonic product of MZVs as a boundary values, the result of this paper says that the harmonic product of MZVs comes from the decomposition of the fundamental solution of the KZ equation on $\cM_{0,5}$.\\

This paper is organized as follows: Section 2 is devoted to the preliminaries for universal enveloping algebras of Lie algebras and shuffle algebras. In Section 3 and 4, we introduce the moduli space $\cM_{0,n}$ and the KZ equation on $\cM_{0,n}$. In Section 5, we discuss the KZ equation on $\cM_{0,4}$. In this case, KZE is regarded as the KZ equation of one variable (KZE1) in the cubic coordinate system of $\cM_{0,4}$. Moreover we consider generalization of KZE1 known as the Schlesinger type equations. In section 6, we consider the harmonic product of MZVs and MPLs. Similarly as in the case of the harmonic product of MZVs, the harmonic product of MPLs is defined as a sort of ``series shuffle product''.

In Section 7, we consider KZE on $\cM_{0,5}$. In the cubic coordinate system of $\cM_{0,5}$, the equation can be written as the KZ equation of two variables (KZE2). We show the decomposition of the infinitesimal pure braid Lie algebra $\fX$ and its universal enveloping algebra $\cU(\fX)$ in two ways (Proposition \ref{prop:decomposition_fX}). In Section 8, we give an interpretation of the decompositions of $\fX$ based on the fiber space structure of the moduli space $\cM_{0,5}$.

In Section 9, we discuss the reduced bar algebra $\cB$ and its decomposition. Let $S$ be a shuffle algebra of 1-forms appeared in KZE2. The reduced bar algebra $\cB$ is the subalgebra of $S$ spanned by elements which satisfies Chen's integrability condition. An iterated integral of an element of $\cB$ depends only on a homotopy class of the integral contour. $\cB$ is interpreted as the 0-th cohomology group of the reduced bar complex of the Orlik-Solomon algebra associated with $\cM_{0,5}$ \cite{K}.

However we consider $\cB$ from a different view point. We observe that $\cB$ is a dual Hopf algebra of $\cU(\fX)$ (Proposition \ref{prop:UX_B_duality}). We denote by $\cB^0$ the subalgebra of $\cB$ spanned by elements which are regular at the origin. Through the decomposition of $\cU(\fX)$, $\cB$ and $\cB^0$ decompose to tensor product of shuffle algebras in two ways (Proposition \ref{prop:decomposition_cB} and \ref{prop:decomposition_cB0}). Such decomposition of $\cB^0$ corresponds to iterated integrations along the specific contours named $C_{1\otimes2}$ and $C_{2\otimes1}$ (Figure \ref{fig:C12_C21}).

In section 10, we discuss hyperlogarithms of the type $\cM_{0,5}$ and MPLs of two variables through the decompositions of $\cB^0$.

In Section 11, the generalized harmonic product relations (GHPRs, for short) are formulated through iterated integration of an element of $\cB^0$ along the contours $C_{1\otimes2}$ and $C_{2\otimes1}$ (Theorem \ref{thm:generalized_harmonic_product_relation}). Furthermore we prove that the GHPRs contain the harmonic product of MPLs (Theorem \ref{thm:ghpl_harmonic_product}).

In Section 12, we construct the fundamental solution normalized at the origin of KZE2 (Proposition \ref{prop:IISol_2KZ}) and show the decomposition of the fundamental solution (Proposition \ref{prop:decomposition}). Finally in Section 13, we calculate the fundamental solution along the contour $C_{1\otimes2}$, $C_{2\otimes1}$ and show that the GHPRs are equivalent to the relations which comes from the decompositions of the fundamental solution (Theorem \ref{thm:GHPR_is_decomposition}).\\

In \cite{OU2}, we will discuss the connection problem of the KZ equation on $\cM_{0,5}$ from the viewpoint of the decomposition theorem.\\

\paragraph{\bf Acknowledgment}

The first author is supported by Waseda University Grant for Special Research Projects No. 2010B-200, 2011B-095. The second author is partially supported by JPSP Grant-in-Aid No. 19540056, 22540035.

\section{Preliminary}\label{sec:preliminary}

For a Lie algebra $\fg$, we denote by $\cU(\fg)$ the universal enveloping algebra of $\fg$ and $\bunit$ the unit of $\cU(\fg)$. $\cU(\fg)$ has a Hopf algebra structure as follows. We define the coproduct $\Delta$, the counit $\varepsilon$ as an algebra morphism and an antipode $\rho$ as an anti-algebra morphism by
\begin{align*}
\Delta(x)&=\bunit\otimes x+x\otimes \bunit,\\
\varepsilon(x)&=0,\\
\rho(x)&=-x \end{align*}
for $x \in \fg$.

If $\fg$ is a graded Lie algebra $\fg = \bigoplus_{s=1}^{\infty} \fg_s,\; [\fg_s,\fg_{s'}] \subset \fg_{s+s'}$, the universal enveloping algebra $\cU(\fg)$ is also graded:
\begin{gather*}
\cU(\fg)=\bigoplus_{s=0}^{\infty}\cU_s(\fg),\\
\cU_s(\fg)\,
\cU_{s'}(\fg)\subset \cU_{s+s'}(\fg),
\end{gather*}
where $\cU_s(\fg)$ is determined by $\fg_s \subset \cU_s(\fg)$. Moreover $\cU(\fg)$ is a graded Hopf algebra, namely
\begin{align*}
\Delta(\cU_s(\fg)) &\subset \sum_{s'+s''=s}\cU_{s'}(\fg)\otimes \cU_{s''}(\fg),\\
\varepsilon(\cU_s(\fg))&=\{0\}
\end{align*}
for $s \ge 1$ and $\rho$ preserves the grading as an anti-algebra morphism. We denote by $\widetilde{\cU}(\fg)$ the completion of $\cU(\fg)$ with respect to this grading.\\

For a set $\fA=\{A_1,\ldots,A_n\}$, we denote by $\C\{\fA\}=\C\{A_1,\ldots,A_n\}$ the free Lie algebra generated by $A_1,\ldots,A_n$ over $\C$. The universal enveloping algebra $\cU(\C\{\fA\})$ is nothing but the non-commutative polynomial algebra $\C\langle \fA\rangle$ generated by $\fA$. $\cU(\C\{\fA\})$ is a graded Hopf algebra with grading given by the length of the words.\\

The shuffle algebra $S(A)=S(a_1,\ldots,a_n)$ generated by the set $A$ of $a_1,\ldots,a_n$ (Reutenauer \cite{R}) is a non-commutative polynomial algebra $\C\langle A \rangle$ generated by $A$ with the shuffle product $\sh$. The shuffle product is defined as
\begin{align*}
w \sh \bnull&=\bnull\sh w=w,\\
(a_1 w_1) \sh (a_2 w_2)&=a_1 (w_1 \sh (a_2 w_2)) + a_2 ((a_1 w_1) \sh w_2),
\end{align*}
where $\bnull$ is the unit of $\C\langle A \rangle$ (that is, $\bnull$ is the empty word), $a_1,a_2 \in A$, and $w,w_1,w_2$ are words of $\C\langle A \rangle$. $(S(A),\sh,\bnull)$ is an associative commutative algebra and has a Hopf algebra structure by the coproduct
\begin{equation*}
\Delta^{*}(a_{i_1}\cdots a_{i_r})=\sum_{k=0}^r a_{i_1}\cdots a_{i_k}\otimes a_{i_{k+1}}\cdots a_r,
\end{equation*}
where $a_{i_k} \in A$ (we regard $a_{i_1}\cdots a_{i_0}$ for $k=0$ and $a_{i_{r+1}}\cdots a_{i_r}$ for $k=r$ as $\bnull$), the counit $\varepsilon^{*}(a)=0$ for $a \in A$ and the antipode $\rho^{*}(a_{i_1}\cdots a_{i_r})=(-1)^r a_{i_r}\cdots a_{i_1}$ for $a_{i_1},\ldots,a_{i_r} \in A$.  $S(A)=\bigoplus_{s=0}^{\infty}S_s(A)$ is also a graded Hopf algebra with the grading defined by the length of words. The restricted dual of this Hopf algebra is the universal enveloping algebra of the free Lie algebra $\cU(\C\{A_1,\ldots,A_n\})$ with respect to the duality defined by the pairing
\begin{equation*}
\langle a_{i_1}\cdots a_{i_r},A_{j_1}\cdots A_{j_s}\rangle=
\begin{cases}
1&(r=s, i_k=j_k \text{ for }1\le k \le r),\\
0&\text{(otherwise)}.
\end{cases}
\end{equation*}

\section{The moduli space $\cM_{0,n}$}
We denote by 
\begin{equation}
\F_n(\bP^1)=\{(x_1,\ldots,x_n) \in (\bP^1)^n \;|\; x_i\neq x_j\;\; (i\neq j)\}
\end{equation}
the configuration space of $n$ points on $\bP^1=\bP^1_{\C}$. The projective general linear group $\PGL(2,\C)$ acts on $\F_n(\bP^1)$ as a linear fractional transformation diagonally. The quotient space by this action
\begin{equation}
\cM_{0,n}=\PGL(2,\C) \backslash \F_n(\bP^1)
\end{equation}
is called the moduli space. There are topological homeomorphisms
\begin{equation*}
\F_n(\bP^1)\approx \PGL(2,\C) \times \cM_{0,n}
\end{equation*}
and
\begin{equation*}
\cM_{0,n}\approx \F_{n-3}(\bP^1-\{0,1,\infty\})
\end{equation*}
for $n\ge 4$. Especially we can identify $\cM_{0,4}$ as $\bP^1-\{0,1,\infty\}$. We define $[x_1,\ldots,x_n]$ to be the $\PGL(2,\C)$ orbit of $(x_1,\ldots,x_n)$. This is called the homogeneous coordinate system of $\cM_{0,5}$.

We introduce two coordinate systems on $\cM_{0,n}$ introduced by Brown \cite{B}. The first one is the simplicial coordinate system
\begin{equation}
y_i=\frac{x_i-x_{n-2}}{x_i-x_n} \cdot \frac{x_{n-1}-x_n}{x_{n-1}-x_{n-2}} \quad (1 \leq i \leq n-3).
\end{equation}
The correspondence
\begin{equation*}
[x_1,\ldots,x_n]\mapsto (y_1,\ldots,y_{n-3})
\end{equation*}
gives the homeomorphism
\begin{equation*}
\cM_{0,n}\approx \F_{n-3}(\bP^1-\{0,1,\infty\}).
\end{equation*}

The second is the cubic coordinate system introduced through
\begin{equation}
z_1=y_1, z_2= \frac{y_2}{y_1}, \cdots, z_{n-3}= \frac{y_{n-3}}{y_{n-4}} \ \Longleftrightarrow \ y_i=z_1\cdots z_i \quad (1 \leq i \leq n-3).
\end{equation}
The cubic coordinate system gives a blowing up for the simplicial coordinate system to be normal crossing at the origin.

 There is a smooth compactification of $\cM_{0,n}$ denoted by $\overline{\cM}_{0,n}$. For $\cM_{0,5}$, the compactification $\overline{\cM}_{0,5}$ (Yoshida \cite{Y}) can be written as
\begin{equation}
\overline{\cM}_{0,5}=\PGL(2,\C) \backslash \{(x_1,\ldots,x_5)\;|\; \text{if $x_i=x_j=x_k$, }\#\{i,j,k\}\le 2\}
\end{equation} 
and 
\begin{equation}
\cM_{0,5}=\overline{\cM}_{0,5}-\bigcup_{1\le i < j \le 5} D_{ij}
\end{equation}
holds, where $D_{ij}=\{[x_1,\ldots,x_5] \in \overline{\cM}_{0,5} \;|\; x_i=x_j\}$. We have
\begin{equation}
D_{ij}\cap D_{kl}=\begin{cases}
\emptyset & (\{i,j\}\cap\{k,l\}\neq \emptyset, \{i,j\}\neq\{k,l\})\\
\{\text{one point}\} & (\{i,j\}\cap\{k,l\}= \emptyset),
\end{cases}
\end{equation}
thus the divisors $\{D_{ij}\}$ are normal crossing. Moreover we obtain
\begin{equation*}
D_{ij}-\bigcup_{\{k,l\}\neq \{i,j\}}D_{kl} \approx \cM_{0,4}.
\end{equation*}
The real part of $\cM_{0,5}$, defined by
\begin{equation}
\cM_{0,5}(\R)=\PGL(2,\R)\backslash \{(x_1,x_2,x_3,x_4,x_5) \in \R^5\;|\;x_i\neq x_j \;(i\neq j)\}
\end{equation}
has 12 distinct connected components and each component is homeomorphic a pentagon like Figure \ref{fig:m05}.\\

\begin{figure}[h]
\begin{picture}(0,4)(-4,0)
\put(0.5,0){\scalebox{0.5}{\includegraphics{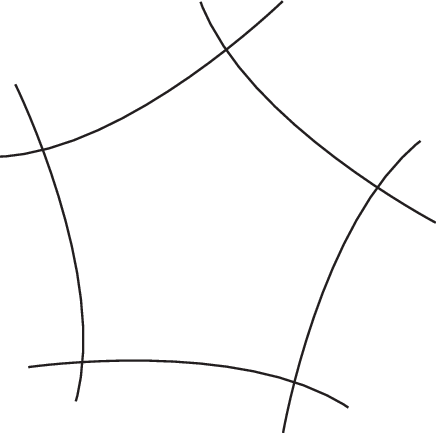}}}
\put(1.7,0.25){\footnotesize $D_{23}$}
\put(0.4,1.5){\footnotesize $D_{45}$}
\put(1.2,2.9){\footnotesize $D_{12}$}
\put(3,2.7){\footnotesize $D_{35}$}
\put(3.4,1.1){\footnotesize $D_{14}$}
\end{picture}
\caption{the homogeneous coordinate system of $\cM_{0,5}$}
\label{fig:m05}
\end{figure}

The divisors $\{D_{ij}\}$ are blowing down to the divisors
\begin{equation}
\{y_1=0,1,\infty\}\cup\{y_2=0,1,\infty\}\cup\{y_1=y_2\}
\end{equation}
in the simplicial coordinate system, and
\begin{equation}
\{z_1=0,1,\infty\}\cup\{z_2=0,1,\infty\}\cup\{z_1z_2=1\}
\end{equation}
in the cubic coordinate system. The real part of these divisors are drawn as Figure \ref{fig:m05_simplicial_cubic}.

\begin{figure}[h]
\begin{picture}(0,4)(-2.5,0)
\put(-0.8,0.5){\scalebox{0.33}{\includegraphics{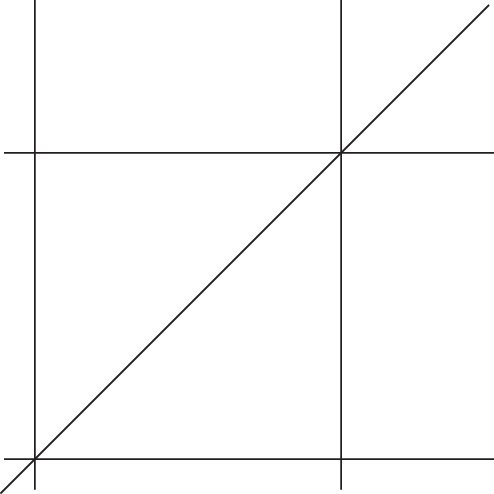}}}
\put(-1.2,0.2){\footnotesize $(0,0)$}
\put(1.1,0.3){\footnotesize $(1,0)$}
\put(-1.5,2.6){\footnotesize $(0,1)$}
\put(1.4,2.5){\footnotesize $(1,1)$}
\put(2.1,0.6){\footnotesize $y_1$}
\put(-0.7,3.4){\footnotesize $y_2$}

\put(5.4,0.4){\scalebox{0.33}{\includegraphics{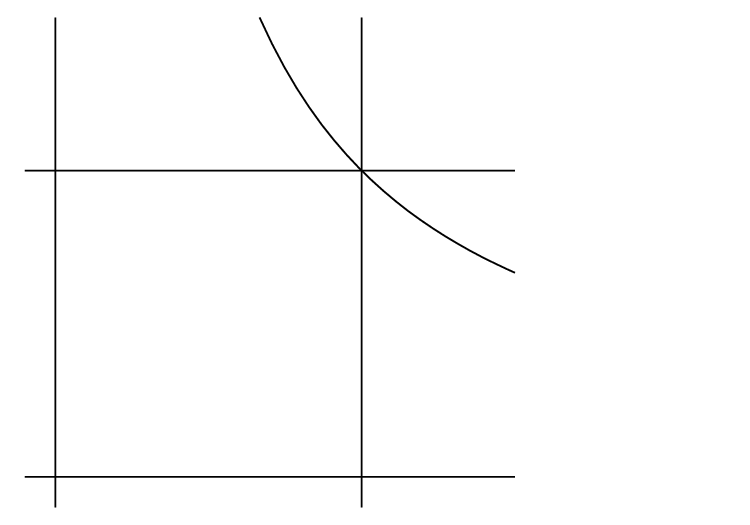}}}
\put(5,0.2){\footnotesize $(0,0)$}
\put(7.3,0.3){\footnotesize $(1,0)$}
\put(4.7,2.6){\footnotesize $(0,1)$}
\put(7.5,2.6){\footnotesize $(1,1)$}
\put(8.3,0.6){\footnotesize $z_1$}
\put(5.5,3.4){\footnotesize $z_2$}
\end{picture}
\caption{the simplicial and cubic coordinate system of $\cM_{0,5}$}
\label{fig:m05_simplicial_cubic}
\end{figure}

\section{KZ equation on $\cM_{0,n}$}

Denote by $\fX$ the Lie algebra derived by the lower central series of the fundamental group of $\F_n(\bP^1)$(Ihara \cite{I}). The Lie algebra $\fX$ is written as
\begin{equation}
\fX:= \C\{\varOmega_{ij}\;|\;{1\le i,j \le n}\}\Big/ (\text{IPBR}),
\end{equation}
and referred to as the infinitesimal pure braid Lie algebra. The infinitesimal pure braid relation (IPBR) is defined by
\begin{equation}
\begin{cases}
\varOmega_{ij}=\varOmega_{ji}, \quad &\varOmega_{ii}=0,\\
\sum_j \varOmega_{ij}=0 \quad (\forall i ),  \quad &[\varOmega_{ij},\varOmega_{kl}]=0 \quad (\{i,j\} \cap \{k,l\} = \emptyset). \label{IPBR}
\end{cases}
\end{equation}
Since the relations \eqref{IPBR} are homogeneous, the infinitesimal pure braid Lie algebra $\fX$ has the natural grading derived from $\C\{\varOmega_{ij}\}$, and $\cU(\fX)$ has a graded Hopf algebra structure.\\

We call the total differential equation (connection) on $\F_n(\bP^1)$
\begin{equation}\label{KZeq}
dG=\varOmega G,  \qquad \varOmega = \sum_{i<j}\omega_{ij}\varOmega_{ij}, \qquad \omega_{ij}=d\log(x_i-x_j)
\end{equation}
the KZ equation(Drinfel'd \cite{D}). We note that this equation is defined on $\F_n(\C)$, but the relation $\sum_j \varOmega_{ij}=0$ on \eqref{IPBR} makes it regular at infinity. There are unique non-trivial second order relations among $\omega_{ij}$'s called the Arnold relations(Arnold \cite{A})
\begin{equation}\label{AR}
      \omega_{ij} \wedge \omega_{ik} + \omega_{ik} \wedge \omega_{jk} + 
      \omega_{jk} \wedge \omega_{ij} = 0
\end{equation}
for $i<j<j$.

From \eqref{IPBR} and \eqref{AR}, the equation \eqref{KZeq} is an integrable system and has $\PGL(2,\C)$-invariance. Thus one can regard \eqref{KZeq} as an equation on the moduli space $\cM_{0,n}$ and solutions of the equation are considered as $\widetilde{\cU}(\fX)$-valued functions on $\cM_{0,n}$. The equation \eqref{KZeq} has logarithmic singularities along the divisors $D=\{D_{ij}\}$, where $D_{ij}=\{[x_1,\ldots,x_n] \in \overline{\cM}_{0,n}\;|\; x_i=x_j\}$.

\section{KZ equation of one variable}\label{sec:KZ1}

The KZ equation on $\cM_{0,4}$ is described in the cubic coordinate system as
\begin{equation}\label{1KZeq}
dG=\varOmega G, \qquad \varOmega=\xi_1X_1+\xi_{11}X_{11}, \qquad \xi_1=\frac{dz_1}{z_1},\; \xi_{11}=\frac{dz_1}{1-z_1},
\end{equation}
where $X_1=\varOmega_{12},\; X_{11}=-\varOmega_{13}$. We call the equation \eqref{1KZeq} the KZ equation of one variable (KZE1). The equation \eqref{1KZeq} has logarithmic singularities at $D=\{z_1=0,1,\infty\}$. In this case, the infinitesimal pure braid relation \eqref{IPBR} is trivial, thus the infinitesimal pure braid Lie algebra $\fX=\C\{X_1,X_{11}\}$ is a free Lie algebra generated by $X_1,X_{11}$. The Arnold relations \eqref{AR} have only a trivial relation $\xi_1 \wedge \xi_{11}=0$.\\

We consider the fundamental solution $\cL(z_1)$ normalized at the origin of \eqref{1KZeq}. It is the solution expressed as $\cL(z_1)=\hcL(z_1)z_1^{X_1}$, where $\hcL(z_1)$ is a $\widetilde{\cU}(\fX)$-valued holomorphic function on a neighborhood at $z_1=0$ and $\hcL(0)=\bunit$.

\begin{prop}\label{prop:fundamental_solution_KZ1}
The fundamental solution $\cL(z_1)$ normalized at the origin exists uniquely and expressed as follows:
\begin{align}
\hcL(z_1)&=\sum_{s=0}^{\infty} \hcL_s(z_1), \label{1KZSol}\\
\hcL_s(z_1) &=\sum_{k_1+\cdots+k_r=s} \left(\int_0^{z_1} \xi_1^{k_1-1}\xi_{11} \cdots \xi_1^{k_r-1} \xi_{11}\right) \label{1KZSol2} \\[-1ex]
& \hspace{2cm}\times \ad(X_1)^{k_1-1}\mu(X_{11})\cdots\ad(X_1)^{k_r-1}\mu(X_{11})(\bunit),\notag
\end{align}
where $\ad(X_1)(F)=[X_1,F],\;\; \mu(X_{11})(F)=X_{11}F$ for $F \in \cU(\fX)$.
\end{prop}

The integral in the right hand side of \eqref{1KZSol2} stands for the iterated integral; it is defined by
\begin{equation}
\int_{z_1^{(0)}}^{z_1}\xi_i w:=\int_{z_1^{(0)}}^{z_1}\left(\xi_i \int_{z_1^{(0)}}^{z_1}w\right),\qquad \int_{z_1^{(0)}}^{z_1}\bnull:=1,
\end{equation}
where $i=1$ or $11$, and $w$ is a word of the shuffle algebra $S(\xi_1,\xi_{11})$. If $w$ is not terminated by $\xi_{1}$, we can set the start point $z_1^{(0)}$ as 0.

\begin{proof}[Proof of Proposition \ref{prop:fundamental_solution_KZ1}]
The degree $s$ homogeneous component of the holomorphic part $\hcL_s(z_1)$ satisfies the following recursive equation
\begin{equation*}
\frac{d\hcL_{s+1}(z_1)}{dz_1}=\frac{1}{z_1}[X_1,\hcL_s(z_1)]+\frac{1}{1-z_1}X_{11}\hcL_s(z_1).
\end{equation*}
By starting from $\hcL_0(z_1)=\bunit$, we have
\begin{equation*}
\hcL_s(z_1)=\int_0^{z_1}(\xi_1\otimes\ad(X_1)+\xi_{11}\otimes\mu(X_{11}))^s(\bnull\otimes\bunit)
\end{equation*}
inductively. This is nothing but the equation \eqref{1KZSol2}.
\end{proof}

We note that the fundamental solution $\cL(z_1)$ is a grouplike element of $\widetilde{\cU}(\fX)$, namely
\begin{align}
\Delta(\cL(z_1))&=\cL(z_1)\otimes\cL(z_1),\\
\varepsilon(\cL(z_1))&=1.
\end{align}

The function defined by the iterated integral $\int_0^{z_1}$ from $0$ to $z_1$ of a word $\xi_1^{k_1-1} \xi_{11} \cdots \xi_1^{k_r-1} \xi_{11}$ is denoted by $\Li_{k_1,\ldots,k_r}(z_1)$
\begin{equation}\label{1MPL}
\Li_{k_1,\ldots,k_r}(z_1):=\int_0^{z_1} \xi_1^{k_1-1} \xi_{11} \cdots \xi_1^{k_r-1} \xi_{11}.
\end{equation}
We call this a multiple polylogarithm of one variable (MPL1, for short). MPL1 is a many-valued analytic function on $\bP_1-\{0,1,\infty\}$ and has a Taylor expansion at $z=0$ as follows:
\begin{equation}
\Li_{k_1,\ldots,k_r}(z_1)=\sum_{m_1>\cdots>m_r>0}\frac{z_1^{m_1}}{m_1^{k_1}\cdots m_r^{k_r}}. \label{MPL}
\end{equation}
This series converges absolutely on $|z_1|<1$. If $k_1>1$, the series also converges as $z_1$ tends to 1 and defines a multiple zeta value (MZV)
\begin{equation}
\zeta(k_1,\ldots,k_r):=\lim_{z_1 \to 1}\Li_{k_1,\ldots,k_r}(z_1)=\sum_{m_1>\cdots>m_r>0}\frac{1}{m_1^{k_1}\cdots m_r^{k_r}}. \label{MZV}
\end{equation}

The sum of index $k=k_1+\cdots+k_r$ is called the weight and the length of index $r$ is referred to as the depth.\\

For a word $\xi^{k_1-1}\xi_{11}\cdots\xi^{k_r-1}\xi_{11} \in S(\xi_1,\xi_{11})$, we put
\begin{align}
&\Li(\xi_1^{k_1-1} \xi_{11}\cdots \xi_1^{k_r-1} \xi_{11};z_1)=\Li_{k_1,\ldots,k_r}(z_1), \label{Li_notation}\\
&\zeta(\xi_1^{k_1-1} \xi_{11}\cdots \xi_1^{k_r-1} \xi_{11})=\zeta(k_1,\ldots,k_r) \qquad (k_1>1) \label{zeta_notation}
\end{align}
and extend $\Li$ and $\zeta$ to the $\C$ linear map
\begin{align*}
\Li: &\;S^0(\xi_1,\xi_{11})=\C\bnull+S(\xi_1,\xi_{11})\xi_{11} \to \{\C\text{-valued analytic functions}\},\\
\zeta: &\;S^{10}(\xi_1,\xi_{11})=\C\bnull+\xi_1S(\xi_1,\xi_{11})\xi_{11} \to \C.
\end{align*}

Let $a_1,\ldots,a_n \in \C-\{0\}$ be distinct non-zero complex numbers. KZE1 is generalized to the following total differential equation
\begin{equation}\label{g1KZ}
dG=\left(\frac{A_0dz}{z}+\sum_{i=1}^n \frac{a_iA_idz}{1-a_iz}\right)G
\end{equation}
on $\bP^1-\{0,\frac{1}{a_1},\ldots,\frac{1}{a_n},\infty\}$. The coefficients $A_0,A_1,\ldots,A_n$ are assumed to generate the free Lie algebra $\C\{A_0,A_1,\ldots,A_n\}$. We call this equation the generalized KZ equation of one variable or the Schlesinger type equation. The fundamental solution normalized at the origin of the equation can be constructed in a similar fashion as KZE1.

\begin{prop}
Put $\omega_0=\frac{dz}{z},\omega_1=\frac{a_1dz}{1-a_1z},\ldots,\omega_n=\frac{a_ndz}{1-a_nz}$. The fundamental solution $\cL(z)$ normalized at $z_1=0$ of \eqref{g1KZ} exists uniquely and expressed as follows:
\begin{align}
\cL(z)&=\hcL(z)z^{A_0},\\
\hcL(z)&=\sum_{s=0}^{\infty} \hcL_s(z), \label{g1KZSol}\\
\hcL_s(z) &=\sum_{\substack{i_1,\ldots,i_r \in \{1,\ldots,n\}\\k_1+\cdots+k_r=s}} \left(\int_0^{z} \omega_0^{k_1-1}\omega_{i_1} \cdots \omega_0^{k_r-1} \omega_{i_r}\right) \notag \\[-1ex]
& \hspace{2cm}\times \ad(A_0)^{k_1-1}\mu(A_{i_1})\cdots\ad(A_0)^{k_r-1}\mu(A_{i_r})(\bunit).
\end{align}
\end{prop}
We call a function defined by the iterated integral $\int_0^{z} \omega_0^{k_1-1}\omega_{i_1} \cdots \omega_0^{k_r-1} \omega_{i_r}$ a hyperlogarithm and denote it by
\begin{equation}
L({}^{k_1}a_{i_1}\ldots{}^{k_r}a_{i_r};z_1):=\int_0^{z} \omega_0^{k_1-1}\omega_{i_1} \cdots \omega_0^{k_r-1} \omega_{i_r}.
\end{equation}
A hyperlogarithm is a many-valued analytic function on $\bP^1-\{0,\frac{1}{a_1},\ldots,\frac{1}{a_n},\infty\}$ and has a Taylor expansion
\begin{equation}
L({}^{k_1}a_{i_1}\cdots{}^{k_r}a_{i_r};z_1)=\sum_{m_1>m_2>\cdots>m_r>0}\frac{a_{i_1}^{m_1-m_2}a_{i_2}^{m_2-m_3}\cdots a_{i_r}^{m_r}}{m_1^{k_1}\cdots m_r^{k_r}} {z_1}^{m_1}
\end{equation}
on a neighborhood of $z_1=0$. This series converges absolutely on
\begin{equation*}
|z_1|<\min\{1/|a_i|\;|\;1\le i \le n\}.
\end{equation*}

For $n=1$ and $a_1=1$, the generalized KZ equation of one variable is nothing but KZE1 and hyperlogarithms are MPL1s. For $n=2, a_1=1$ and $a_2=z_2$, hyperlogarithms are referred to as hyperlogarithms of the type $\cM_{0,5}$. We will discuss them in Section \ref{sec:hyperlogarithm}.

\section{The harmonic product of MZVs and MPLs}\label{sec:HP}

For the depth 1, MZVs are nothing but the Riemann zeta values. For this case, we have
\begin{align}
\zeta(k_1)\zeta(l_1)&=\sum_{m_1>0}\frac{1}{m_1^{k_1}}\sum_{n_1>0}\frac{1}{n_1^{l_1}}\\
&=\left(\sum_{m_1>n_1>0}+\sum_{(m_1=n_1)>0}+\sum_{n_1>m_1>0}\right)\frac{1}{m_1^{k_1}n_1^{l_1}} \notag \\
&=\zeta(k_1,l_1)+\zeta(k_1+l_1)+\zeta(l_1,k_1). \notag
\end{align}
These relations can be generalized by using Hoffman's harmonic algebra (Hoffman \cite{H}) as follows. The harmonic product $*$ on $S^0=S^0(\xi_1,\xi_{11})=\C\bnull+S(\xi_1,\xi_{11})\xi_{11} \subset S(\xi_1,\xi_{11})$ is defined as
\begin{align}
&w_1 * \bnull=\bnull * w_1 = w_1,\\
&(\chi_{k_1} w_1)*(\chi_{k_2} w_2)\\
&\qquad \qquad =\chi_{k_1}(w_1*(\chi_{k_2} w_2))+\chi_{k_2}((\chi_{k_1} w_1)*w_2)+\chi_{k_1+k_2}(w_1*w_2) \notag
\end{align}
for any words $w_1,w_2 \in S^0$ and $\chi_k$ stands for $\xi_1^{k-1} \xi_{11}$. Then $(S^0,*,\bnull)$ becomes an associative commutative algebra. We remark that the harmonic product is expressed in a recursive form as follows:
\begin{align}
&(\chi_{k_1}\cdots\chi_{k_r})*(\chi_{l_1}\cdots\chi_{l_s}) \label{HP-RD}\\
&\qquad =\sum_{p=1}^{r-1} \Big((\chi_{k_1}\cdots\chi_{k_p}\chi_{l_1})((\chi_{k_{p+1}}\cdots\chi_{k_r})*(\chi_{l_2}\cdots\chi_{l_s})) \notag \\*[-3.5ex]
&\qquad \qquad\qquad\phantom{=} +(\chi_{k_1}\cdots\chi_{k_p}\chi_{k_{p+1}+l_1})((\chi_{k_{p+2}}\cdots\chi_{k_r})*(\chi_{l_2}\cdots\chi_{l_s})) \Big) \notag \\*[-1ex]
&\qquad \qquad \phantom{=} +\chi_{k_1}\cdots\chi_{k_r}\chi_{l_1}\cdots\chi_{l_s} \notag \\*[-1ex]
&\qquad \phantom{=} +\sum_{p=1}^{s-1} \Big((\chi_{l_1}\cdots\chi_{l_p}\chi_{k_1})((\chi_{k_2}\cdots\chi_{k_r})*(\chi_{l_{p+1}}\cdots\chi_{l_s})) \notag \\*[-3.5ex]
&\qquad \qquad\qquad \phantom{=} +(\chi_{l_1}\cdots\chi_{l_p}\chi_{k_1+l_{p+1}})((\chi_{k_2}\cdots\chi_{k_r})*(\chi_{l_{p+2}}\cdots\chi_{l_s})) \Big) \notag \\*[-1ex]
&\qquad \qquad \phantom{=} +\chi_{l_1}\cdots\chi_{l_s}\chi_{k_1}\cdots\chi_{k_r} \notag \\*
&\qquad \phantom{=} +\chi_{k_1+l_1}((\chi_{k_2}\cdots\chi_{k_r})*(\chi_{l_2}\cdots\chi_{l_s})), \notag
\end{align}
where we regard $(\chi_{k_{i+1}}\cdots\chi_{k_i})$ and $(\chi_{l_{i+1}}\cdots\chi_{l_i})$ as $\bnull$. Furthermore, for words $w_1,w_2$ in $S^{10}=S^{10}(\xi_1,\xi_{11})=\C\bnull+\xi_1S(\xi_1,\xi_{11}) \xi_{11}$, we have
\begin{equation}\label{HP-MZV}
\zeta(w_1)\zeta(w_2)=\zeta(w_1 * w_2).
\end{equation}
This is the harmonic product of MZVs.\\

We denote by $\cIDX$ the non-commutative algebra over $\C$ spanned by all indexes of positive integers and the empty index $\emptyset$. The product of index $(k_1,\ldots,k_r)\cdot(l_1,\ldots,l_s)$ is defined by the concatenation $(k_1,\ldots,k_r,l_1,\ldots,l_s)$. There is an algebra isomorphism from $S^0$ to $\cIDX$ defined by $\chi_{k_1}\cdots\chi_{k_r} \mapsto (k_1,\ldots,k_r)$, and $\xi$ (resp. $\Li$) can be regarded as a $\C$ linear map on $\cIDX$. One can express the equation \eqref{HP-RD} and define the harmonic product on $\cIDX$ as follows;
\begin{align}
&(k_1,\ldots,k_r)*\emptyset=\emptyset*(k_1,\ldots,k_r)=(k_1,\ldots,k_r),\\
&(k_1,\ldots,k_r)*(l_1,\ldots,l_s) \label{index-HD-RD}\\
&\qquad=\sum_{p=1}^{r-1} \Big((k_1,\ldots,k_p,l_1)\cdot((k_{p+1},\ldots,k_r)*(l_2,\ldots,l_s)) \notag \\*[-3.5ex]
&\qquad\qquad\qquad\phantom{=} +(k_1,\ldots,k_p,k_{p+1}+l_1)\cdot((k_{p+2},\ldots,k_r)*(l_2,\ldots,l_s)) \Big) \notag \\*[-1ex]
&\qquad\qquad\phantom{=} +(k_1,\ldots,k_r,l_1,\ldots,l_s) \notag \\*[-1ex]
&\qquad\phantom{=} +\sum_{p=1}^{s-1} \Big((l_1,\ldots,l_p,k_1)\cdot((k_2,\ldots,k_r)*(l_{p+1},\ldots,l_s)) \notag \\*[-3.5ex]
&\qquad\qquad\qquad\phantom{=} +(l_1,\ldots,l_p,k_1+l_{p+1})\cdot((k_2,\ldots,k_r)*(l_{p+2},\ldots,l_s)) \Big) \notag \\*[-1ex]
&\qquad\qquad\phantom{=} +(l_1,\ldots,l_s,k_1,\ldots,k_r) \notag \\*
&\qquad\phantom{=} +(k_1+l_1)\cdot((k_2,\ldots,k_r)*(l_2,\ldots,l_s)), \notag
\end{align}
where $(k_{r+1},\ldots,k_{r})=(l_{s+1},\ldots,l_s)=\emptyset$. In this notation, we can describe the harmonic product of MZVs as
\begin{equation}
\zeta(k_1,\ldots,k_r)*\zeta(l_1,\ldots,l_s)=\zeta((k_1,\ldots,k_r)*(l_1,\ldots,l_s))\end{equation}
for $k_1,l_1>1$.\\

This product is generalized to multiple polylogarithms as follows. Set 
\begin{equation}\label{2MPL}
\Li_{k_1,\ldots,k_r}(i,r-i;z_1,z_2):=\sum_{m_1>\cdots>m_r>0}\frac{z_1^{m_1}z_2^{m_{i+1}}}{m_1^{k_1}\cdots m_r^{k_r}},
\end{equation}
and call this a multiple polylogarithm of two variables (MPL2). This is a special case of hyperlogarithms (for detail, see Section \ref{sec:hyperlogarithm}) and in particular we have
\begin{align}
\Li_{k_1,\ldots,k_r}(r,0;z_1,z_2)&=\Li_{k_1,\ldots,k_r}(z_1),\\
\Li_{k_1,\ldots,k_r}(0,r;z_1,z_2)&=\Li_{k_1,\ldots,k_r}(z_1z_2).
\end{align}
For MPL1s of depth 1, we have
\begin{align}
\Li_{k_1}(z_1)&\Li_{l_1}(z_2)=\sum_{m_1>0}\frac{z_1^{m_1}}{m_1^{k_1}}\sum_{n_1>0}\frac{z_2^{n_1}}{n_1^{l_1}} \label{MPL-HP1}\\
&=\left(\sum_{m_1>n_1>0}\!+\!\sum_{(m_1=n_1)>0}\!+\!\sum_{n_1>m_1>0}\right)\frac{z_1^{m_1}z_2^{n_1}}{m_1^{k_1}n_1^{l_1}} \notag\\
&=\Li_{k_1,l_1}(1,1;z_1,z_2)+\Li_{k_1+l_1}(z_1z_2)+\Li_{l_1,k_1}(1,1;z_2,z_1). \notag
\end{align}
In the same fashion, the product $\Li_{k_1,\ldots,k_r}(z_1)\Li_{l_1,\ldots,l_s}(z_2)$ is expressed as follows. We denote the harmonic product of indexes by
\begin{equation*}
(k_1,\ldots,k_r)*(l_1,\ldots,l_s)=\sum (p_1,\ldots,p_t).
\end{equation*}
By definition, $p_i$ ($i=1,\ldots,t$) is $k_u$, $l_v$ or $k_u+l_v$ for some $u \in \{1,\ldots,r\},\; v \in \{1,\ldots,s\}$ and each of $k_1,\ldots,k_r,l_1,\ldots,l_s$ appears just one time in $(p_1,\ldots,p_t)$. Under this notation, the harmonic product of MPLs is given by
\begin{equation}\label{MPL-HP}
\Li_{k_1,\ldots,k_r}(z_1)\Li_{l_1,\ldots,l_s}(z_2)=\sum_{(p_1,\ldots,p_t)}\sum_{m_1>\cdots>m_p>0}\frac{z_1^{m_{i_1}}z_2^{m_{i_2}}}{m_1^{p_1}\cdots m_t^{p_t}},
\end{equation}
where $k_1$ appears in $p_{i_1}$ and $l_1$ appears in $p_{i_2}$. We note that each summand of the right hand side is
\begin{equation*}
\sum_{m_1>\cdots>m_p>0}\frac{z_1^{m_{i_1}}z_2^{m_{i_2}}}{m_1^{p_1}\cdots m_t^{p_t}}=
\begin{cases}
\Li_{p_1,\ldots,p_t}(i_2-1,t-i_2+1;z_1,z_2) & (i_1=1<i_2)\\
\Li_{p_1,\ldots,p_t}(i_1-1,t-i_1+1;z_2,z_1) & (i_2=1<i_1)\\
\Li_{p_1,\ldots,p_t}(0,t;z_1z_2) & (i_1=i_2=1).
\end{cases}
\end{equation*}

From \eqref{index-HD-RD}, \eqref{MPL-HP1} and \eqref{MPL-HP}, we see that the harmonic product of MPL1s is given in a recursive way as follows;
\begin{align}
&\Li_{k_1,\ldots,k_r}(z_1)\Li_{l_1,\ldots,l_s}(z_2) \label{MPL-HP-RD}\\
&\quad =\sum_{p=1}^{r-1} \Big(\Li_{(k_1,\ldots,k_p,l_1)\cdot((k_{p+1},\ldots,k_r)*(l_2,\ldots,l_s))}(p,\bullet;z_1,z_2) \notag \\*[-3ex]
&\quad\phantom{=}\qquad\qquad +\Li_{(k_1,\ldots,k_p,k_{p+1}+l_1)\cdot((k_{p+2},\ldots,k_r)*(l_2,\ldots,l_s))}(p,\bullet;z_1,z_2) \Big) \notag \\*
&\quad\phantom{=}\qquad +\Li_{(k_1,\ldots,k_r,l_1,\ldots,l_s)}(r,s;z_1,z_2) \notag \\*
&\quad\phantom{=}+\sum_{p=1}^{s-1} \Big(\Li_{(l_1,\ldots,l_p,k_1)\cdot((k_2,\ldots,k_r)*(l_{p+1},\ldots,l_s))}(p,\bullet;z_2,z_1) \notag \\*[-3ex]
&\quad\phantom{=}\qquad\qquad +\Li_{(l_1,\ldots,l_p,k_1+l_{p+1})\cdot((k_2,\ldots,k_r)*(l_{p+2},\ldots,l_s))}(p,\bullet;z_2,z_1) \Big) \notag \\*
&\quad\phantom{=}\qquad +\Li_{(l_1,\ldots,l_s,k_1,\ldots,k_r)}(s,r;z_2,z_1) \notag \\*
&\quad\phantom{=}+\Li_{(k_1+l_1)\cdot((k_2,\ldots,k_r)*(l_2,\ldots,l_s))}(0,\bullet;z_2,z_1), \notag
\end{align}
where, in $\Li_{k_1,\ldots,k_r}(p,\bullet;z_i,z_j)$, $\bullet=(\text{the depth }r- \text{ the first number }p)$.

\section{KZ equation of two variables and the decomposition of the infinitesimal pure braid Lie algebra}\label{sec:decomposition}

We consider the KZ equation on $\cM_{0,5}$. Put
\begin{equation*}
X_1=\varOmega_{12}+\varOmega_{13}+\varOmega_{23},\; X_{11}=-\varOmega_{14},\; X_2=\varOmega_{23},\; X_{22}=-\varOmega_{12},\ X_{12}=-\varOmega_{24}.
\end{equation*}
Then the KZ equation \eqref{KZeq} on $\cM_{0,5}$ becomes
\begin{gather}\label{2KZeq}
dG = \varOmega G, \qquad \varOmega=\xi_1 X_1 + \xi_{11} X_{11} + \xi_2 X_2 + \xi_{22} X_2 + \xi_{12} X_{12}, \\[1ex]
\xi_1=\frac{dz_1}{z_1},\;\; \xi_{11}=\frac{dz_1}{1-z_1},\;\; \xi_2=\frac{dz_2}{z_2},\;\; \xi_{22}=\frac{dz_2}{1-z_2},\;\; \xi_{12}=\frac{d(z_1z_2)}{1-z_1z_2} \notag
\end{gather}
on the cubic coordinate system. We call this the KZ equation of two variables (KZE2). This equation has logarithmic singularities along the divisors $D=\{z_1=0,1,\infty\}\cup\{z_2=0,1,\infty\}\cup\{z_1z_2=1\}$.

The infinitesimal pure braid relation \eqref{IPBR} reads
\begin{equation}\label{IPBR2}
\begin{cases}
[X_1,X_2]=[X_{11},X_2]=[X_1,X_{22}]=0, \\
[X_{11},X_{22}]=[-X_{11},X_{12}]=[X_{22},X_{12}]=[-X_1+X_2,X_{12}],
\end{cases}
\end{equation}
and the Arnold relations \eqref{AR}
\begin{equation}
\begin{cases}
\xi_1\wedge\xi_{11}=0,\quad \xi_2\wedge\xi_{22}=0,\\
(\xi_1+\xi_2)\wedge\xi_{12}=0,\\
\xi_{11}\wedge\xi_{12}+\xi_{22}\wedge(\xi_{11}-\xi_{12})-\xi_2\wedge\xi_{12}=0.
\end{cases}
\end{equation}
The infinitesimal pure braid Lie algebra $\fX=\C\{\fA\} \Big/ \eqref{IPBR2}$, where $\fA$ is a set of $X_1,X_{11},X_2,X_{22},X_{12}$, has the following decomposition.

\begin{prop}\label{prop:decomposition_fX}
The following decompositions hold as $\C$-vector spaces:
\begin{equation}\label{DX}
\fX= \C\{X_1,X_{11},X_{12}\}\oplus \C\{X_2,X_{22}\} = \C\{X_2,X_{22},X_{12}\}\oplus \C\{X_1,X_{11}\},
\end{equation}
where $\C\{X_1,X_{11},X_{12}\}$ and $\C\{X_2,X_{22},X_{12}\}$ are {\rm Lie} ideals of $\fX$, and
\begin{align}
\cU(\fX)&=\cU(\C\{X_1,X_{11},X_{12}\})\otimes \cU(\C\{X_2,X_{22}\}) \label{DU} \\
& = \cU(\C\{X_2,X_{22},X_{12}\})\otimes \cU(\C\{X_1,X_{11}\}). \notag
\end{align}
\end{prop}

The proposition can be proved by using induction on the degree of the gradation on $\fX$. However we can interpret the decomposition \eqref{DX} of $\fX$ from the geometric viewpoint of $\cM_{0,5}$ as in the following section.

\section{Geometrical interpretation of the decomposition of the infinitesimal pure braid Lie algebra}\label{sec:geometry}

We consider the projection
\begin{equation*}
p_2: \cM_{0,5} \to \cM_{0,4};\; [x_1,x_2,x_3,x_4,x_5] \mapsto [x_1,x_3,x_4,x_5]
\end{equation*}
defined by the homogeneous coordinate system. In the cubic coordinate system, this projection can be identified as $(z_1,z_2) \mapsto z_1$. The projection gives the fiber space structure of $\cM_{0,5}$ over the base space $\cM_{0,4}$ and the fiber of $z_1$ is $\bP^1-\{0,1,\infty,z_1^{-1}\}$ (see Figure \ref{fig:fiber_m05}).

\begin{figure}[h]
\begin{picture}(0,4.2)(-3,-1)
\put(0,-0.9){$\cM_{0,4}\approx \bP^1-\{0,1,\infty\}$}
\put(0,0){\line(1,0){5}}
\put(0.5,0){\circle{0.1}}
\put(0.4,-0.4){0}
\put(2.0,0){\circle{0.1}}
\put(1.9,-0.4){1}
\put(3.5,0){\circle{0.1}}
\put(3.3,-0.4){$\infty$}
\put(1.0,2.5){$\cM_{0,5}$}
\put(1.6,2.3){\vector(0,-1){1}}
\put(1.8,1.7){$p_2$}
\put(2.65,0){\circle*{0.1}}
\put(2.55,-0.4){$z_1$}
\put(2.65,0.3){\line(0,1){2.5}}
\put(2.75,1.7){\shortstack[l]{$p_2^{-1}(z_1)$\\\quad $\approx \bP^1-\{0,1,\infty,z_1^{-1}\}$}}
\end{picture}
\caption{the fiber space structure of $\cM_{0,5}$ on $\cM_{0,4}$}
\label{fig:fiber_m05}
\end{figure}
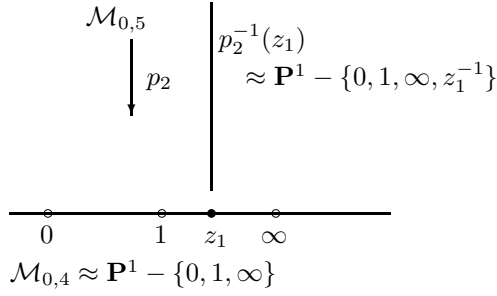

For this fiber space structure, we have the homotopy exact sequence
\begin{multline}\label{homotopy_secuence}
\pi_2(\cM_{0,4},z_1)\;=1 \to \pi_1(\bP^1-\{0,1,\infty,z_1^{-1}\},z_2) \to \pi_1(\cM_{0,5},(z_1,z_2))\\
 \to \pi_1(\cM_{0,4},z_1) \to \pi_0(\bP^1-\{0,1,\infty,z_1^{-1}\},z_2)\;=1.
\end{multline}
There exists a continuous section of this fiber space, so that the exact sequence \eqref{homotopy_secuence} splits. Hence we have the decomposition of the fundamental group
\begin{equation}
\pi_1(\cM_{0,5},(z_1,z_2)) \cong  \pi_1(\bP^1-\{0,1,\infty,z_1^{-1}\},z_2) \rtimes \pi_1(\cM_{0,4},z_1).
\end{equation}
Taking the lower central series of the decomposition (infinitesimal version of the decomposition, Ihara \cite{I} Lemma 3.1.1 and Proposition 3.2.1), we obtain the decomposition of $\fX$
\begin{equation*}
\fX= \C\{X_2,X_{22},X_{12}\}\oplus \C\{X_1,X_{11}\},
\end{equation*}
where $\C\{X_2,X_{22},X_{12}\}$ is a Lie ideal of $\fX$.\\

In the same way, the projection
\begin{equation*}
p_4: \cM_{0,5}\to \cM_{0,4};\; [x_1,x_2,x_3,x_4,x_5] \mapsto [x_1,x_2,x_3,x_5]
\end{equation*}
(in the cubic coordinate system, $(z_1,z_2)\mapsto z_2$) also define the fiber space structure of $\cM_{0,5}$ over $\cM_{0,4}$, and the fiber of $z_2$ is $\bP^1-\{0,1,\infty,z_2^{-1}\}$. Hence we have the decomposition of the fundamental group
\begin{equation}
\pi_1(\cM_{0,5},(z_1,z_2)) \cong  \pi_1(\bP^1-\{0,1,\infty,z_2^{-1}\},z_1)\rtimes \pi_1(\cM_{0,4},z_2)
\end{equation}
and the infinitesimal version
\begin{equation*}
\fX= \C\{X_1,X_{11},X_{12}\}\oplus \C\{X_2,X_{22}\}
\end{equation*}
($\C\{X_1,X_{11},X_{12}\}$ is a Lie ideal) is followed.\\

\section{The reduced bar algebra and iterated integrals of two variables}\label{sec:reduced_bar}

We consider the dual Hopf algebra of $\cU(\fX)$ which consists of differential forms.

Let $A=\{\xi_1,\xi_{11},\xi_2,\xi_{22},\xi_{12}\}$ be a set of differential forms and $S=S(A)$ be the shuffle algebra generated by $A$. We say that a degree $s$ element of $S$
\begin{equation}\label{element_S_s}
S_s(A) \ni \varphi=\sum_{I=(i_1,\ldots,i_s)} \!\!\!\! c_I\; \omega_{i_1}\cdots\omega_{i_s}, \quad (C_I \in \C,\;\; \omega_{i} \in A)
\end{equation}
satisfies Chen's integrability condition, if and only if the formula 
\begin{equation}\label{Chen's_integrability}
\sum_I c_I \; \omega_{i_1}\otimes\cdots\otimes\omega_{i_l}\wedge\omega_{i_{l+1}}\otimes\cdots\otimes\omega_{i_s}=0
\end{equation}
holds for all $1\le l \le s-1$ as a multiple differential form.

From Chen's lemma(Chen \cite{C2}), if an element $\varphi$ \eqref{element_S_s} satisfies Chen's integrability condition, the iterated integral $\int_{(z_1^{(0)},z_2^{(0)})}^{(z_1,z_2)}\varphi$ depends only on the homotopy class of the integral path and defines a many-valued analytic function on $\bP^1 \times \bP^1 - D$.\\

We define the reduced bar algebra $\cB$ as the subspace of $S$ spanned by elements which satisfy Chen's integrability condition. The reduced bar algebra has the grading $\cB=\bigoplus_{s=0}^{\infty} \cB_s,\; \cB_s=\cB\cap S_s(A)$ and $(\cB,\sh,\bnull,\Delta^{*},\varepsilon^{*},\rho^{*})$ is a graded Hopf algebra. The subspaces $\cB_0,\cB_1$ and $\cB_2$ are described as
\begin{align}
\cB_0&=\C \bnull, \label{cB0}\\
\cB_1&=\C \xi_1\oplus\C \xi_{11}\oplus\C \xi_2\oplus\C \xi_{22}\oplus\C \xi_{12}, \label{cB1}\\
\cB_2&=\bigoplus_{\omega \in A}\C \omega\omega \oplus \bigoplus_{i=1,2}\C \xi_i\xi_{ii} \oplus \bigoplus_{i=1,2}\C \xi_{ii}\xi_i \label{cB2}\\*
&\qquad \oplus \bigoplus_{\substack{\omega_1=\xi_1,\xi_{11}\\ \omega_2=\xi_2,\xi_{22}}}\C (\omega_1\omega_2+\omega_2\omega_1) \oplus \bigoplus_{\omega \in A-\{\xi_{12}\}}\C (\omega\xi_{12}+\xi_{12}\omega)\notag \\*
&\qquad\qquad\oplus \C (\xi_1\xi_{12}+\xi_2\xi_{12}) \oplus 
   \C (\xi_{11}\xi_{12}+\xi_{22}\xi_{11}-\xi_{22}\xi_{12}-\xi_2\xi_{12}). \notag
\intertext{Moreover, $\cB_s$ ($s>2$) is characterized as follows(Brown \cite{B}):}
\cB_s&=\bigcap_{j=1}^{s-1}\cB_j\cB_{s-j}=\bigcap_{j=0}^{s-2}\underbrace{\cB_1\cdots\cB_1}_{j\text{ times}}\cB_2\underbrace{\cB_1\cdots\cB_1}_{s-j-2\text{ times}}. \label{cBs_characterize}
\end{align}

\begin{prop}\label{prop:UX_B_duality}
$\cU(\fX)$ is a graded restricted dual Hopf algebra of $\cB$.
\end{prop}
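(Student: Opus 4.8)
$\cU(\fX)$ is a graded restricted dual Hopf algebra of $\cB$.

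Let me sketch a proof.

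The plan is to exhibit an explicit nondegenerate graded pairing between $\cU(\fX)$ and $\cB$ that is compatible with all the Hopf-algebra structure maps, and then read off the claim from the general duality between a finitely graded Hopf algebra and its restricted dual. I would start from the known duality at the level of the free objects: by the discussion preceding the statement, $\fH = \cU(\fL)$ is the restricted dual Hopf algebra of the free shuffle algebra $S(A)$, under the pairing that makes the letters $(Z_1,Z_{11},Z_2,Z_{22},Z_{12})$ dual to $(\zeta_1,\zeta_{11},\zeta_2,\zeta_{22},\zeta_{12})$ and, more generally, pairs a word $Z_{i_1}\circ\cdots\circ Z_{i_s}$ in $\fH$ with a word $\zeta_{j_1}\circ\cdots\circ\zeta_{j_s}$ in $S(A)$ by $\langle Z_{i_1}\cdots Z_{i_s},\, \zeta_{j_1}\cdots\zeta_{j_s}\rangle = \delta_{i_1 j_1}\cdots\delta_{i_s j_s}$. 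Under this pairing the concatenation product of $\fH$ is dual to the deconcatenation coproduct $\Delta^*$ of $S(A)$, and the shuffle product of $S(A)$ is dual to the coproduct of $\fH$; this is Ree's classical theorem, which the paper already cites.

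Next I would identify how the quotient $\cU(\fX) = \fH/\fJ$ and the subspace $\cB \subset S(A)$ sit inside this duality. The key point is that the ideal $\fJ$ is by definition generated by $\fI_2$, whose spanning elements are exactly the IPBR commutators \eqref{eq:ipbr:2KZ1}, \eqref{eq:ipbr:2KZ2}; on the dual side, the space $\cB_2$ was computed precisely as the annihilator (within $S_2(A)$) of these same relations, using the Arnold relations \eqref{eq:ar:2KZ1}, \eqref{eq:ar:2KZ2} which are the wedge-form incarnation of the IPBR. So in degree $2$ we have $\cB_2 = (\fJ \cap \fH_2)^\perp$ by construction. I would then propagate this to all degrees: the formula $\cB_s = \bigcap_{j=1}^{s-1}\cB_j\circ\cB_{s-j}$ quoted from \cite{B} expresses the higher reduced-bar pieces purely in terms of $\cB_2$ and concatenation, while on the dual side $\fJ\cap\fH_s$ is the span of all words having an $\fI_2$-factor in some consecutive pair of slots. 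The duality between concatenation and deconcatenation then matches ``$\cB_2$ in slots $(j,j+1)$, arbitrary elsewhere'' against ``$\fI_2$ in slots $(j,j+1)$, arbitrary elsewhere'', giving $\cB_s = (\fJ\cap\fH_s)^\perp$ in each degree $s$.

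From $\cB_s = (\fJ\cap\fH_s)^\perp$ I would conclude that the free pairing descends to a well-defined, graded, nondegenerate pairing between $\cU_s(\fX) = \fH_s/(\fJ\cap\fH_s)$ and $\cB_s$: well-definedness because $\cB$ kills $\fJ$, and nondegeneracy because taking annihilators twice in the finite-dimensional graded pieces recovers the original subspace, so neither side has a nonzero radical. Finally I would check Hopf-compatibility: since $\cB$ is a sub-Hopf-algebra of $S(A)$ and $\cU(\fX)$ a quotient Hopf algebra of $\fH$, the product/coproduct/antipode of each are induced from $S(A)$ and $\fH$ respectively, so the compatibility relations $\langle ab, c\rangle = \langle a\otimes b, \Delta^* c\rangle$, $\langle \Delta(a), c\otimes d\rangle = \langle a, c\sh d\rangle$, and the antipode relation pass to the quotient/subspace automatically. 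The main obstacle I expect is the degree-$s$ annihilator computation of the previous paragraph: one must verify carefully that the two descriptions—``$\cB_2$ inserted in each adjacent pair'' and ``$\fI_2$ inserted in each adjacent pair''—are genuinely exact annihilators of one another under deconcatenation/concatenation duality, rather than merely one being contained in the dual of the other. Getting this containment in both directions (equivalently, that the relations $\fI_2$ generate $\fJ$ with no unexpected syzygies lowering the rank) is where the real content of \cite{B} enters, and I would lean on the stated formula for $\cB_s$ together with a dimension count in each graded degree to close the argument.
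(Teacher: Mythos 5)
Your proposal is correct and follows essentially the same route as the paper: starting from the free duality between $\fH=\cU(\fL)$ and $S(A)$, identifying $\cB_2^{\bot}\cap\fH_2=\fJ_2$ by a dimension count, and then using Brown's formula $\cB_s=\bigcap_{l}\cB_1^{\circ(l-1)}\circ\cB_2\circ\cB_1^{\circ(s-l-1)}$ together with concatenation/deconcatenation duality to get $\cB_s^{\bot}=\sum_{l}\fH_{l-1}\fJ_2\fH_{s-l-1}=\fJ_s$, so the pairing descends nondegenerately to $\cU(\fX)=\fH/\fJ$. The ``slotwise'' annihilator matching you flag as the main obstacle is exactly the content of the paper's preparatory lemma, proved there the same way (basis of $\cB_1^{\circ(l-1)}\circ\cB_2\circ\cB_1^{\circ(s-l-1)}$ plus the degree-$2$ dimension count), and the Hopf-compatibility checks you spell out are left implicit in the paper.
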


For the proof we need the following notation and lemma. We denote by $\fH=\cU(\C\{\fA\})$ the universal enveloping algebra of the free Lie algebra generated by $\fA=\{X_1,X_{11},X_2,X_{22},X_{12}\}$, and by $\fH=\bigoplus_{s=0}^{\infty}\fH_s$ the grading of $\fH$ defined by the length of words.

Let
\begin{align*}
\fI_2&=\C [X_1, X_2]+\C [X_1, X_{22}]+\C [X_{11}, X_2]+\C([X_{11},X_{22}]+[X_{11},X_{12}])\\
&\hspace{1.5cm} +\C([X_{11},X_{22}]+[X_{12},X_{22}])+\C([X_{11},X_{22}]+[X_1-X_2,X_{12}])
\end{align*}
be a subspace of $\C\{\fA\}$ and $\fI$ (resp. $\fJ$) be an ideal of $\C\{\fA\}$ (resp. a two sided ideal of $\fH$) generated by $\fI_2$. Clearly we have $\fX=\C\{\fA\}/\fI$ and $\cU(\fX)=\fH/\fJ$.

\begin{lem}
\begin{equation*}
(\underbrace{\cB_1\cdots\cB_1}_{l-1\text{ times}}
\cB_2\underbrace{\cB_1\cdots\cB_1}_{s-l-1\text{ times}})^{\bot}\cap\fH_s=\fH_{l-1}\fJ_2\fH_{s-l-1}.
\end{equation*}
\end{lem}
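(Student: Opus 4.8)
The plan is to exploit the graded duality between $\fH$ and $S(A)$ together with the observation that in degree one there is no Chen condition to impose, so $\cB_1 = S_1(A)$ is the full space. Consequently the two outer blocks of the subspace in question are complete, $\underbrace{\cB_1\circ\cdots\circ\cB_1}_{l-1\text{ times}} = S_{l-1}(A)$ and $\underbrace{\cB_1\circ\cdots\circ\cB_1}_{s-l-1\text{ times}} = S_{s-l-1}(A)$, and the subspace whose annihilator we must compute is $V = S_{l-1}(A)\circ\cB_2\circ S_{s-l-1}(A)$.

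First I would record the factorization of the pairing. Recall that $\fH_s$ and $S_s(A)$ are dual under the perfect graded pairing for which the length-$s$ words in $Z_1,\ldots,Z_{12}$ and the length-$s$ words in $\zeta_1,\ldots,\zeta_{12}$ form dual bases. A word of length $s$ splits uniquely, by position, into a prefix of length $l-1$, a middle block of length $2$, and a suffix of length $s-l-1$; and two words agree if and only if all three blocks agree. Hence the pairing factors as the tensor product of the three block pairings, under the graded vector-space identifications $S_s(A)\cong S_{l-1}(A)\otimes S_2(A)\otimes S_{s-l-1}(A)$ and $\fH_s\cong\fH_{l-1}\otimes\fH_2\otimes\fH_{s-l-1}$. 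Under these identifications the concatenation product becomes the tensor product, so that $V = S_{l-1}(A)\otimes\cB_2\otimes S_{s-l-1}(A)$.

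Next I would invoke the elementary rule for the annihilator of a tensor product of subspaces of perfectly paired spaces: $(W_1\otimes W_2\otimes W_3)^\perp$ equals $\sum_{i} (\cdots)\otimes W_i^\perp\otimes(\cdots)$, as a short dimension count confirms. Here the two outer factors are the full spaces $S_{l-1}(A)$ and $S_{s-l-1}(A)$, whose annihilators vanish, so only the middle term survives:
\[
V^\perp\cap\fH_s = \fH_{l-1}\otimes(\cB_2^\perp\cap\fH_2)\otimes\fH_{s-l-1} = \fH_{l-1}\,(\cB_2^\perp\cap\fH_2)\,\fH_{s-l-1},
\]
the last equality re-interpreting the tensor product as the concatenation product of $\fH$.

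It then remains to settle the degree-two base case $\cB_2^\perp\cap\fH_2 = \fJ_2$. Since the ideal $\fJ$ is generated in degree two, one has $\fJ_2 = \fI_2$, so this reduces to the finite linear-algebra statement that the Arnold relations \eqref{eq:ar:2KZ1}, \eqref{eq:ar:2KZ2}, which cut the $19$-dimensional $\cB_2$ out of the $25$-dimensional $S_2(A)$, are dual to the infinitesimal pure braid relations \eqref{eq:ipbr:2KZ1}, \eqref{eq:ipbr:2KZ2} spanning $\fI_2$; equivalently, that $\cB_2^\perp\cap\fH_2$, of dimension $25-19=6$, coincides with the $6$-dimensional $\fI_2$. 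I would check this by verifying that each of the six generators of $\fI_2$ pairs to zero against every basis element of $\cB_2$ (so that $\fI_2\subseteq\cB_2^\perp\cap\fH_2$) and that the six generators are linearly independent in $\fH_2$, whence equality by counting dimensions. I expect this base-case computation to be the only genuinely calculational point; everything else is the formal duality and the positional factorization of the pairing described above.
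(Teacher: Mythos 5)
Your proposal is correct and follows essentially the same route as the paper's own proof: both reduce the statement to the position-wise (block) factorization of the duality pairing between $\fH_s$ and $S_s(A)$, observe that the outer blocks impose no condition, and settle the middle block via the dimension count $\cB_2^{\bot}\cap\fH_2=\fJ_2$. Your tensor-product formulation of the annihilator argument is merely a cleaner, more explicit writeup of the step the paper states informally (that an element orthogonal to the subspace factors with a middle block $\Phi\in\fH_2$ orthogonal to $\cB_2$).
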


\begin{proof}
Let
\begin{equation*}
\{\xi_{i_1}\cdots\xi_{i_{l-1}}\varphi\xi_{i_{l+2}}\cdots\xi_{i_{s}}\;|\; i_k\in\{1,11,2,22,12\},\; \varphi \in \text{the basis of }\cB_2\}
\end{equation*}
be a basis of $\cB_1\cdots\cB_1\cB_2\cB_1\cdots\cB_1$. 
An element of $\fH_s$ which is orthogonal to the element $\xi_{i_1}\cdots\xi_{i_{l-1}}\varphi\xi_{i_{l+2}}\cdots\xi_{i_{s}}$ can be expressed as $X_{i_1}\cdots X_{i_{l-1}}\Phi X_{i_{l+2}}\cdots X_{i_{s}}$, for some $\Phi \in \fH_2$ such that $\langle\Phi,\varphi\rangle=0$. By counting the dimensions, we obtain $\cB_2^{\bot} \cap \fH_2=\fJ_2$. Thus we have proved the lemma.
\end{proof}

\begin{proof}[Proof of Proposition \ref{prop:UX_B_duality}]
Since  $\fJ_2=\fI_2$ and 
\begin{equation*}
\fJ_s=\sum_{l=1}^{s-1} \fH_{l-1}\fJ_2\fH_{s-l-1},
\end{equation*}
for $s>2$, the lemma says that $\fJ$ is the orthogonal complement of $\cB$.
Hence $\cU(\fX)=\fH/\fJ$ is a dual Hopf algebra of $\cB$.
\end{proof}

\vspace{\baselineskip}

We denote by $\cB^0$ (resp. $S^0$) the subalgebra of $\cB$ (resp. $S$) generated by elements which have no terms ended with $\xi_1$ and $\xi_2$. For an element $\varphi \in \cB^0$, the iterated integral $\int_{(0,0)}^{(z_1,z_2)}\varphi$ makes sense.\\

Corresponding to the decomposition of $\cU(\fX)$, one can show that $\cB$ is isomorphic to the tensor product of shuffle algebras as follows.

\begin{prop}\label{prop:decomposition_cB}
Put $\xi_{12}^{(1)}=\frac{z_2dz_1}{1-z_1z_2}$ and $\xi_{12}^{(2)}=\frac{z_1dz_2}{1-z_1z_2}$. We define the projection $\Pr_{i\otimes j}^{(i)},\Pr_{i\otimes j}^{(j)} \quad (\{i,j\}=\{1,2\})$ by
\begin{align}
\Pr_{i\otimes j}^{(i)}&: S \to S(\xi_i,\xi_{ii},\xi_{12}^{(i)});\quad \xi_j,\xi_{jj}\mapsto 0,\; \xi_{12}\mapsto \xi_{12}^{(i)},\\
\Pr_{i\otimes j}^{(j)}&: S \to S(\xi_j,\xi_{jj});\quad \xi_i,\xi_{ii},\xi_{12} \mapsto 0.
\end{align}
Define maps $\iota_{i\otimes j}:\cB \! \to S(\xi_i,\xi_{ii},\xi_{12}^{(i)})\otimes S(\xi_j,\xi_{jj})$ by
\begin{equation}
\iota_{i\otimes j}=\left(\left.\Pr_{i\otimes j}^{(i)}\right|_{\cB} \otimes \left.\Pr_{i\otimes j}^{(j)}\right|_{\cB}\right)\circ \Delta^{*},
\end{equation}
where $\Delta^{*}$ is the coproduct of $\cB$.
Then $\iota_{i\otimes j}$ are shuffle algebra isomorphisms.
\end{prop}

Furthermore we have a decomposition of $\cB^0$ as follows.
\begin{prop}\label{prop:decomposition_cB0}
The restrictions
\begin{equation*}
\iota_{i\otimes j}|_{\cB^0}: \cB^0 \to S^0(\xi_{i},\xi_{ii},\xi_{12}^{(i)})\otimes S^0(\xi_{j},\xi_{jj})
\end{equation*}
$(\{i,j\}=\{1,2\})$ are also shuffle algebra isomorphisms. 
\end{prop}

We prove only for $\iota_{1\otimes2}$ and $\iota_{1\otimes2}|_{\cB^0}$. We prepare combinatorial claims for $\cB$ and $\cB^0$.

\begin{lem}\label{lem:iota_isomorphism:B:prep}
\begin{enumerate}
\item Assume that $\varphi \in \cB_s \cap \left(S_1(\xi_2,\xi_{22}) S_{s-1}(A)\right)$. Then
\begin{equation*}
\varphi \in S_s(\xi_2,\xi_{22}).
\end{equation*} \label{lem:iota_isomorphism:B:prep:1}
\item For $\varphi \in \cB_s$, the following two statements are equivalent: \label{lem:iota_isomorphism:B:prep:2}
\begin{enumerate}
\item $\varphi$ contains a term of the form $\varphi' \xi_1  \nu_1  \cdots  \nu_p$ $($resp. $\varphi' \xi_2  \mu_1  \cdots  \mu_p$$)$, \label{lem:iota_isomorphism:B:prep:2:1}
\item $\varphi$ contains a term of the form $\varphi'  \nu_1  \cdots  \nu_p  \xi_1$ $($resp. $\varphi'  \mu_1  \cdots  \mu_p  \xi_2$$)$, \label{lem:iota_isomorphism:B:prep:2:2}
\end{enumerate}
where $\varphi' \in \cB$ and $\nu_1,\ldots,\nu_p \in \{\xi_2,\xi_{22}\}$ $($resp. $\mu_1,\ldots,\mu_p \in \{\xi_1,\xi_{11}\}$$)$.
\item Assume that Proposition \ref{prop:decomposition_cB} holds. For $i=1$ and $2$, if $\varphi \in \cB_s$ contains a term which is ended with $\xi_i$, $\iota_{1\otimes2}(\varphi)$ $($resp. $\iota_{2\otimes1}(\varphi)$$)$ also has a term ended with $\xi_i$ in $S(\xi_1,\xi_{11},\xi^{(1)}_{12})$ or $S(\xi_2,\xi_{22})$-component $($resp. $S(\xi_2,\xi_{22},\xi^{(2)}_{12})$ or $S(\xi_1,\xi_{11})$-component $)$. \label{lem:iota_isomorphism:B:prep:3}
\end{enumerate}
\end{lem}

\begin{proof}
\begin{enumerate}
\item This claim can be proved easily by direct calculation for $\cB_2$ and by induction on $s$.

\item Assume that $\varphi \in \cB_s$ contains the term $\varphi' \xi_1  \nu_1  \cdots  \nu_p$. From \eqref{cB2}, $\xi_1 \xi_2$ (resp. $\xi_1  \xi_{22}$) appears in pairs with $\xi_2  \xi_1$ (resp. $\xi_{22}  \xi_1$) in $\cB_2$. Since $\varphi$ belongs to $\cB_{s-p-1}\cB_2\underbrace{\cB_1\cdots\cB_1}_{p \text{ times}}$, $\varphi$ has the term $\varphi' \nu_1  \xi_1 \nu_2 \cdots  \nu_p$. This means that $\varphi$ contains $\varphi' \nu_1  \cdots  \nu_p  \xi_1$. The converse assertion is proved in the same way.

\item We show the claim for $\iota_{1\otimes2}$. Put $\varphi \in \cB_s$ as
\begin{equation*}
\varphi=\varphi_1\xi_1+\varphi_{11}\xi_{11}+\varphi_{12}\xi_{12}+\varphi_2\xi_2+\varphi_{22}\xi_{22},
\end{equation*}
where $\varphi_1,\ldots,\varphi_{22} \in \cB_{s-1}$.

For $i=1$, we have $\varphi_1 \neq 0$ by assumption. Since $\iota_{1\otimes2}$ is a $\sh$-isomorphism, $\iota_{1\otimes2}(\varphi_1)$ is not equal to 0. Thus $\varphi_1$ has a term such as
\begin{equation*}
\mu_1\cdots\mu_p \nu_1\cdots\nu_{s-p}
\end{equation*}
where $\mu_1,\ldots,\mu_p \in \{\xi_1,\xi_{11},\xi_{12}\}$ and $\nu_1,\ldots,\nu_{s-p} \in \{\xi_2,\xi_{22}\}$. This implies that $\varphi$ has the term
\begin{equation*}
\mu_1\cdots\mu_p \nu_1\cdots\nu_{s-p} \xi_1,
\end{equation*}
so has the term
\begin{equation*}
\mu_1\cdots\mu_p \xi_1 \nu_1\cdots\nu_{s-p}
\end{equation*}
by using \eqref{lem:iota_isomorphism:B:prep:1}. Hence $\iota_{1\otimes2}(\varphi)$ contains a term ended with $\xi_1$ in $S(\xi_1,\xi_{11},\xi^{(1)}_{12})$ component.

For $i=2$, we assume that $\varphi_2 \neq 0$. By definition of $\iota_{1\otimes2}$, we have
\begin{align*}
\iota_{1\otimes2}(\varphi)&=(\Pr^{(1)}_{1\otimes2}(\varphi_1)\xi_1)\otimes\bnull
+(\Pr^{(1)}_{1\otimes2}(\varphi_{11})\xi_{11})\otimes\bnull\\*
&\phantom{=}+(\Pr^{(1)}_{1\otimes2}(\varphi_{12})\xi_{12})\otimes\bnull\\*
&\phantom{=}+\iota_{1\otimes2}(\varphi_2) (\bnull\otimes \xi_2)
+\iota_{1\otimes2}(\varphi_{22}) (\bnull\otimes \xi_{22}).
\end{align*}
Thus by virtue of $\iota_{1\otimes2}(\varphi_2) \neq 0$, we see that $\iota_{1\otimes2}(\varphi)$ contains a term ended with $\xi_2$.

\end{enumerate}
\end{proof}

\begin{proof}[Proof of Proposition \ref{prop:decomposition_cB}]
It is enough to prove that $\iota_s:=\iota_{1\otimes2}|_{\cB_s}$ is injective. We show $\ker(\iota_s)=0$ by induction on $s$. For $s=0$ and 1, $\ker(\iota_s)=0$ is clear. For $s=2$, we can show by direct calculation. 
For $s\ge 3$, we assume that $\ker(\iota_{s-1})=0$.

We define the sets
\begin{align*}
S_{1\otimes2,s}(A)&:=\{\varphi\!\in\! S_s(A)\;|\; \text{In all terms of $\varphi$},\\
& \phantom{\{\varphi\!\in\! S_s(A)\;|\; }\text{$\xi_2$ or $\xi_{22}$ appears in the left side of some $\xi_1,\xi_{11},\xi_{12}$}\},\\
S^c_s(\xi_2,\xi_{22})&:=\{\varphi \in S_s(A)\;|\;\text{$\varphi$ is a linear combination of words}\\
&\phantom{=\{\varphi \in S_s(A)\;|\;}\qquad \text{ which have at least one $\xi_1,\xi_{11},\xi_{12}$}\}.
\end{align*}
Then we have clearly $\ker(\iota_s)=\cB_s \cap S_{1\otimes2,s}(A)$.

Put $\varphi \in \ker(\iota_s)$. Since $\varphi \in S_{1\otimes2,s}(A)$, $\varphi$ can be written as
\begin{equation*}
\varphi=\xi_1  \varphi_1 + \xi_{11}  \varphi_{11} + \xi_{12}  \varphi_{12} + \xi_2  \varphi_2 + \xi_{22}  \varphi_{22},
\end{equation*}
where $\varphi_1,\varphi_{11},\varphi_{12}\in S_{1\otimes2,s-1}(A),\;\; \varphi_2,\varphi_{22} \in S^c_{s-1}(\xi_2,\xi_{22})$. On the other hand, from the assumption, we have $\varphi_1,\varphi_{11},\varphi_{12} \in \cB_{s-1}\cap S_{1\otimes2,s-1}(A) = \ker(\iota_{s-1})=0$ and
\begin{equation*}
\varphi=\xi_2  \varphi_2 + \xi_{22}  \varphi_{22}.
\end{equation*}
Thus, from the lemma \eqref{lem:iota_isomorphism:B:prep:1} above, we have $\varphi \in S^c_s(\xi_2,\xi_{22})\cap S_s(\xi_2,\xi_{22})=\{0\}$.\\
\end{proof}

\begin{proof}[Proof of Proposition \ref{prop:decomposition_cB0}]
It suffices to prove
\begin{equation*}
\iota_{1\otimes2}(\cB^0)=S^0(\xi_1,\xi_{11},\xi^{(1)}_{12}) \otimes S^0(\xi_2,\xi_{22}).
\end{equation*}
First, we show $\iota_{1\otimes2}(\cB^0) \subset S^0(\xi_1,\xi_{11},\xi^{(1)}_{12}) \otimes S^0(\xi_2,\xi_{22})$. Put $\varphi \in \cB^0$. It is clear that $\iota_{1\otimes2}(\varphi) \in S(\xi_1,\xi_{11},\xi^{(1)}_{12}) \otimes S^0(\xi_2,\xi_{22})$. We assume that
\begin{equation*}
\iota_{1\otimes2}(\varphi) \in S(\xi_1,\xi_{11},\xi^{(1)}_{12}) \otimes S^0(\xi_2,\xi_{22})-S^0(\xi_1,\xi_{11},\xi^{(1)}_{12}) \otimes S^0(\xi_2,\xi_{22}),
\end{equation*}
namely $\varphi$ contains a term of the form $\varphi' \xi_1  \varphi''$, where $\varphi' \in S(\xi_1,\xi_{11},\xi_{12}), \varphi'' \in S(\xi_2,\xi_{22})$. Thus by Lemma \ref{lem:iota_isomorphism:B:prep} \eqref{lem:iota_isomorphism:B:prep:2}, $\varphi$ contains $\varphi'  \varphi''  \xi_1$. This means that $\varphi$ does not belong to $\cB^0$. Therefore we obtain $\iota_{1\otimes2}^{(1)}(\varphi) \in S^0(\xi_1,\xi_{11},\xi^{(1)}_{12}) \otimes S^0(\xi_2,\xi_{22})$.

Next, we prove that $\iota_{1\otimes2}(\cB^0) \supset S^0(\xi_1,\xi_{11},\xi^{(1)}_{12}) \otimes S^0(\xi_2,\xi_{22})$. Since $\iota_{1\otimes2}$ is an isomorphism, for any $\psi \in S^0(\xi_1,\xi_{11},\xi^{(1)}_{12}) \otimes S^0(\xi_2,\xi_{22})$, there exists a unique element $\varphi \in \cB$ such that $\iota_{1\otimes2}(\varphi)=\psi$. We assume that $\varphi \in \cB-\cB^0$, that is, $\varphi$ has a term ended with $\xi_1$ or $\xi_2$. By using Lemma \ref{lem:iota_isomorphism:B:prep} (\ref{lem:iota_isomorphism:B:prep:3}), we see that $\iota_{1\otimes2}(\varphi)$ is not an element of $S^0(\xi_1,\xi_{11},\xi^{(1)}_{12}) \otimes S^0(\xi_2,\xi_{22})$.

We have thus completed the proof of Proposition \ref{prop:decomposition_cB0}.
\end{proof}

For one variable, it is well known that $S(\xi_1,\xi_{11})$ is the shuffle polynomial ring over $S^0(\xi_1,\xi_{11})=\C\bnull+S(\xi_1,\xi_{11})\xi_{11}$, namely $S(\xi_1,\xi_{11})= S^0(\xi_1,\xi_{11})[\xi_1]$ (Reutenauer \cite{R}). Moreover $S(\xi_1,\xi_{11},\xi^{(1)}_{12})= S^0(\xi_1,\xi_{11},\xi^{(1)}_{12})[\xi_1]$ also holds. As a corollary of Proposition \ref{prop:decomposition_cB} and \ref{prop:decomposition_cB0}, one can show the following ``two variables'' analogue.

\begin{cor}
We have
\begin{equation}
\cB=\cB^0[\xi_1,\xi_2].
\end{equation}
\end{cor}

\vspace{1\baselineskip}

We note that the maps $\iota_{i\otimes j}$ can be obtained by the following way:
\begin{enumerate}
\item Picking up the terms which have the form $\psi_i \psi_j \; (\psi_i \in S(\xi_i,\xi_{ii},\xi_{12}),\; \psi_j \in S(\xi_j,\xi_{jj}))$.
\item Changing each $\psi_i \psi_j$ to $\psi_i \otimes \psi_j \in S(\xi_i,\xi_{ii},\xi_{12})\otimes S(\xi_j,\xi_{jj})$.
\item Replacing $\xi_{12}$ to $\xi_{12}^{(i)}$.\\
\end{enumerate}

\begin{figure}[h]
\setlength{\unitlength}{1.5cm}
\begin{picture}(0,4.3)(-2,0)
\put(0,0){\scalebox{0.75}{\includegraphics{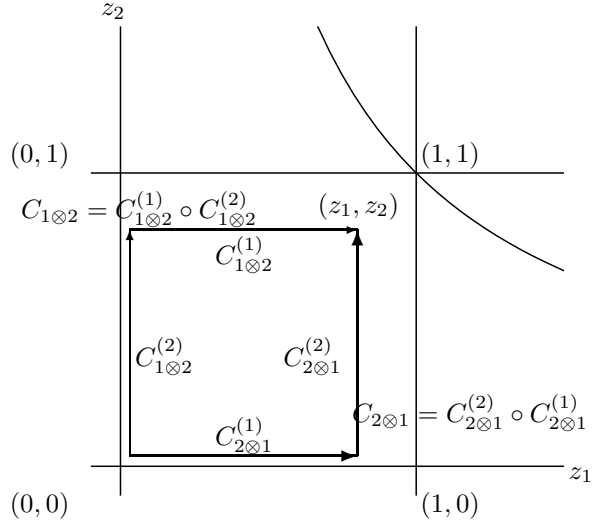}}}
\put(-0.5,0){$(0,0)$}
\put(3.1,0){$(1,0)$}
\put(-0.5,3.1){$(0,1)$}
\put(3.1,3.1){$(1,1)$}
\put(4.4,0.3){$z_1$}
\put(0.3,4.4){$z_2$}
\put(2.2,2.65){$(z_1,z_2)$}
\put(1.3,0.6){$C_{2\otimes1}^{(1)}$}
\put(1.9,1.3){$C_{2\otimes1}^{(2)}$}
\put(0.6,1.3){$C_{1\otimes2}^{(2)}$}
\put(1.3,2.2){$C_{1\otimes2}^{(1)}$}
\put(2.5,0.8){$C_{2\otimes 1}=C_{2\otimes1}^{(2)}\circ C_{2\otimes1}^{(1)}$}
\put(-0.4,2.6){$C_{1\otimes 2}=C_{1\otimes2}^{(1)}\circ C_{1\otimes2}^{(2)}$}

\thicklines
\put(0.55,0.5){\vector(1,0){2}}
\put(2.55,0.5){\vector(0,1){2}}
\thinlines
\put(0.55,0.5){\vector(0,1){2}}
\put(0.55,2.5){\vector(1,0){2}}
\end{picture}
\setlength{\unitlength}{1cm}
\caption{the contours $C_{1\otimes2}, C_{2\otimes1}$}
\label{fig:C12_C21}
\end{figure}

For $0<|z_1|<1$ and $0<|z_2|<1$, we denote by $C_{1\otimes2}^{(2)}$ the contour from $(0,0)$ to $(0,z_2)$ on $\{0\}\times \{z\in \C\;|\; |z|<1\}$, and $C_{1\otimes2}^{(1)}$ the contour from $(0,z_2)$ to $(z_1,z_2)$ on $\{z\in \C\;|\; |z|<1\}\times\{z_2\}$. We also denote by $C_{2\otimes1}^{(1)}$ the contour from $(0,0)$ to $(z_1,0)$ on $\{z\in \C\;|\; |z|<1\}\times\{0\}$, and $C_{2\otimes1}^{(2)}$ the contour from $(z_1,0)$ to $(z_1,z_2)$ on $\{z_1\}\times\{z\in \C\;|\; |z|<1\}$.

We define the integral contours $C_{1\otimes 2}$ and $C_{2\otimes 1}$ by $C_{1\otimes 2}=C_{1\otimes2}^{(1)}\circ C_{1\otimes2}^{(2)}$, $C_{2\otimes 1}=C_{2\otimes1}^{(2)}\circ C_{2\otimes1}^{(1)}$, where the composition of paths $C \circ C'$ is defined by connecting $C$ after $C'$ (see Figure \ref{fig:C12_C21}).

The projection $\iota_{i\otimes j}$ corresponds to the contour $C_{i\otimes j}$ as the following proposition.

\begin{prop}\label{prop:contour_C12_C21}
If $\varphi \in \cB^0$, iterated integrals $\int_{C_{1\otimes 2}}\varphi$ and $\int_{C_{1\otimes 2}}\varphi$ make sense and are given as follows:
\begin{align}
\int_{C_{1\otimes2}}\!\!\varphi\;\;&=\int_{1\otimes2}\!\!\iota_{1\otimes2}(\varphi),\\
\int_{C_{2\otimes1}}\!\!\varphi\;\;&=\int_{2\otimes1}\!\!\iota_{2\otimes1}(\varphi),
\end{align}
where the integrals of the right hand sides stand for
\begin{equation}
\int_{1\otimes2}\psi_1 \otimes \psi_2 := \int_{C_{1\otimes2}}\psi_1\psi_2 = \int_{z_1=0}^{z_1}\!\!\psi_1 \int_{z_2=0}^{z_2}\!\!\!\psi_2 \label{def_integral_12}
\end{equation}
for $\psi_1 \otimes \psi_2 \in S^0(\xi_1,\xi_{11},\xi_{12}^{(1)})\otimes S^0(\xi_2,\xi_{22})$, and
\begin{equation}
\int_{2\otimes1}\psi_2 \otimes \psi_1 := \int_{C_{2\otimes1}}\psi_2\psi_1 = \int_{z_2=0}^{z_2}\!\!\psi_2 \int_{z_1=0}^{z_1}\!\!\!\psi_1 \label{def_integral_21}
\end{equation}
for $\psi_2 \otimes \psi_1 \in S^0(\xi_2,\xi_{22},\xi_{12}^{(2)})\otimes S^0(\xi_1,\xi_{11})$.

\end{prop}

\begin{proof}
The definition of $\iota$ says that $\iota_{i\otimes j}$ is the map picking up terms which do not vanish for integration on $C_{i\otimes j}$.
\end{proof}

\section{The hyperlogarithms of the type $\cM_{0,5}$}\label{sec:hyperlogarithm}
For the integral \eqref{def_integral_12} and \eqref{def_integral_21}, the integrals
\begin{align*}
&\int_{0}^{z_1}w \qquad (w \in S^0(\xi_1,\xi_{11}))
\intertext{and}
&\int_{0}^{z_2}w \qquad (w \in S^0(\xi_2,\xi_{22}))
\end{align*}
are nothing but multiple polylogarithms of one variable appeared in Section \ref{sec:KZ1}
\begin{equation*}
\int_{0}^{z_i}w=\Li(w;z_i) \qquad (w \in S^0(\xi_i,\xi_{ii})).
\end{equation*}
In this section, we consider the integrals
\begin{align*}
\int_{0}^{z_1}w \qquad (w \in S^0(\xi_1,\xi_{11},\xi_{12}^{(1)}))
\intertext{and}
\int_{0}^{z_2}w \qquad (w \in S^0(\xi_2,\xi_{22},\xi_{12}^{(2)})).
\end{align*}
They are referred to as hyperlogarithms of the type $\cM_{0,5}$.

For $w=\xi_1^{k_1-1}\omega_1\cdots \xi_1^{k_r-1}\omega_r \in S^0(\xi_1,\xi_{11},\xi_{12}^{(1)})$ \quad ($\omega_i\in\{\xi_{11},\xi^{(1)}_{12}\}$), we denote by $L(w;z_1)=\int_0^{z_1}w$ the iterated integral of $w$ from 0 to $z_1$. This is the hyperlogarithm
\begin{equation*}
L(w;z_1)=L({}^{k_1}\alpha_1\cdots{}^{k_r}\alpha_r;z_1),
\end{equation*}
where $\alpha_i=1$ or $z_2$ with respect to $\omega_i=\xi_{11}$ or $\xi_{12}^{(1)}$. We call this ``hyperlogarithm of the main variable $z_1$ with the singular points $\{0,1,\frac{1}{z_2},\infty\}$''. This function is a many-valued analytic function on $\bP^1-\{0,1,\frac{1}{z_2},\infty\}$ and has a Taylor expansion 
\begin{equation}
L(w;z_1)=\sum_{m_1>\cdots>m_r>0}\!\!\!\!\!\!\frac{\alpha_1^{m_1-m_2}\alpha_2^{m_2-m_3}\cdots \alpha_r^{m_r}}{m_1^{k_1} \cdots m_r^{k_r}} z_1^{m_1}
\end{equation}
on a neighborhood of $z_1=0$.

Particularly, if $w \in S^0(\xi_1,\xi_{11})$, the hyperlogarithm $L(w;z_1)$ is a multiple polylogarithm of one variable
\begin{equation}
L({}^{k_1}1\cdots{}^{k_r}1;z_1)=\Li_{k_1,\ldots,k_r}(z_1),
\end{equation}
and if $w_1,\ldots,w_i=\xi_{11}, w_{i+1},\ldots,w_r=\xi_{12}^{(1)}$, it is a multiple polylogarithm of two variables appeared in Section \ref{sec:HP}
\begin{equation}
L({}^{k_1}1\cdots{}^{k_i}1{}^{k_{i+1}}z_2\cdots{}^{k_r}z_2;z_1)=\Li_{k_1,\ldots,k_r}(i,r-i;z_1,z_2).
\end{equation}

In the same way, for $w \in S^0(\xi_2,\xi_{22},\xi_{12}^{(2)})$, we call $L(w;z_2)=\int_0^{z_2}w$ ``hyperlogarithms of the main variable $z_2$ with the singular points $\{0,1,\frac{1}{z_1},\infty\}$''.\\

Under this convention, the iterated integral of an element of $\cB^0$ on $C_{1\otimes2}$ (resp. $C_{2\otimes1}$) can be written as the product of hyperlogarithm of $z_1$ (resp. $z_2$) and MPL1 of $z_2$ (resp. $z_1$).\\

Multiple polylogarithms satisfy the following recursive relations which can be proved easily by using the series expansion.
\begin{lem} \label{lem:2-mpl}
$\Li_{k_1,\ldots,k_{i+j}}(i,j;z_1,z_2)$ satisfies the following differential recursive relations.
\begin{align}
&\frac{\partial}{\partial z_1}\Li_{k_1,\ldots,k_{i+j}}(i,j;z_1,z_2) \label{2MPL_dz}\\
&\quad =\begin{cases}
\ds \frac{z_2}{1-z_1z_2}\Li_{k_2,\ldots,k_{j}}(0,j-1;z_1,z_2) \qquad & (i=0,k_1=1),\\
\ds \frac{1}{1-z_1}\Li_{k_2,\ldots,k_{i+j}}(i-1,j;z_1,z_2) & (i>0,k_1=1),\\
\ds \frac{1}{z_1}\Li_{k_1-1,k_2,\ldots,k_{i+j}}(i,j;z_1,z_2) & (k_1>1),
\end{cases} \notag \\
&\frac{\partial}{\partial z_2}\Li_{k_1,\ldots,k_{i+j}}(i,j;z_1,z_2) \label{2MPL_dw} \\
&\; =\begin{cases}
\ds \frac{z_1}{1-z_1z_2}\Li_{k_2,\ldots,k_{j}}(0,j-1;z_1,z_2) & (i=0,k_1=1),\\
\ds \frac{1}{1-z_2} \Li_{k_1,\ldots,k_i,k_{i+2},\ldots,k_{i+j}}(i,j-1;z_1,z_2)\\
\ds \quad -\frac{1}{1-z_2} \Li_{k_1,\ldots,k_i,k_{i+2},\ldots,k_{i+j}}(i-1,j;z_1,z_2)\\
\ds \quad \qquad\;\; -\frac{1}{z_2} \Li_{k_1,\ldots,k_i,k_{i+2},\ldots,k_{i+j}}(i-1,j;z_1,z_2)
& (i>0,k_{i+1}=1),\\
\ds \frac{1}{z_2}\Li_{k_1,\ldots,k_i,k_{i+1}-1,k_{i+2},\ldots,k_{i+j}}(i,j;z_1,z_2) & (k_{i+1}>1).
\end{cases} \notag
\end{align}
\end{lem}

\section{The generalized harmonic product relations and the harmonic product of MPLs}\label{sec:GHPR}

Now we obtain immediately the following theorem.

\begin{thm}[The generalized harmonic product relations]\label{thm:generalized_harmonic_product_relation}
For each $\varphi \in \cB^0$, the relation among hyperlogarithms of the type $\cM_{0,5}$
\begin{equation}\label{GHPR}
\int_{1\otimes2}\iota_{1\otimes2}(\varphi) = \int_{2\otimes1}\iota_{2\otimes1}(\varphi)
\end{equation}
holds.
\end{thm}

\begin{proof}
Since an iterated integral of $\varphi \in \cB^0$ depends only on the homotopy class of the integral path, we have
\begin{equation} \label{proof_of_GHPR}
\int_{C_{1\otimes2}}\varphi=\int_{C_{2\otimes1}}\varphi.
\end{equation}
Applying Proposition \ref{prop:contour_C12_C21} to \eqref{proof_of_GHPR}, we have the theorem.
\end{proof}

Particularly, applying the theorem to $w \in S^0(\xi_1,\xi_{11},\xi_{12}^{(1)})$, we have
\begin{equation}\label{GHPR'}
L(w;z_1)=\int_{2\otimes1}\iota_{2\otimes1}\circ\iota_{1\otimes2}^{-1}(w\otimes\bnull).
\end{equation}
This equation says that the hyperlogarithm $L(w;z_1)$ of the main variable $z_1$ can be represented in terms of hyperlogarithms of the main variable $z_2$ and multiple polylogarithms of $z_1$.

Relations \eqref{GHPR} among hyperlogarithms of the type $\cM_{0,5}$ for all $\varphi \in \cB^0$ are referred to as the generalized harmonic product relations.

We show some examples of the generalized harmonic product relations:
\begin{itemize}
\item For $w=\xi_{11}\xi_{12}^{(1)}$, we have
\begin{equation*}
\iota_{1\otimes2}^{-1}(w\otimes\bnull)=\xi_{11}\xi_{12}+\xi_{22}\xi_{11}-\xi_{22}\xi_{12}-\xi_2\xi_{12}.
\end{equation*}
The generalized harmonic product relation for $\iota_{1\otimes2}^{-1}(\xi_{11}\xi_{12}^{(1)}\otimes\bnull)$ is
\begin{equation*}
\Li_{1,1}(1,1;z_1,z_2)=\Li_1(z_2)\Li_1(z_1)-\Li_{1,1}(1,1;z_2,z_1)-\Li_{2}(0,1:z_2,z_1).
\end{equation*}
This is the simplest case of the harmonic product of MPLs.

\item For $w=\xi_{12}^{(1)}\xi_{11}$, we have
\begin{equation*}
\iota_{1\otimes2}^{-1}(w\otimes\bnull)=\xi_{12}\xi_{11}-\xi_{22}\xi_{11}+\xi_{22}\xi_{12}+\xi_2\xi_{12}.
\end{equation*}
The generalized harmonic product relation for $\iota_{1\otimes2}^{-1}(\xi_{12}^{(1)}\xi_{11}\otimes\bnull)$ is
\begin{align*}
L({}^{1}z_2{}^{1}1;z_1)&=\Li_1(0,1;z_2,z_1)\Li_1(z_1)-\Li_1(z_2)\Li_1(z_1)\\
&\qquad \qquad +\Li_{1,1}(1,1;z_2,z_1)+\Li_{2}(0,1;z_2,z_1).
\end{align*}

\item For $w=\xi_{12}^{(1)}\xi_{11}\xi_{12}^{(1)}$, we have
\begin{align*}
\iota_{1\otimes2}^{-1}(w\otimes\bnull)\!&=\!\xi_{12}\xi_{11}\xi_{12}+\xi_{12}\xi_{22}\xi_{11}+\xi_{22}\xi_{12}\xi_{11}-\xi_{22}\xi_{11}\xi_{12}\notag\\
&\qquad -2\xi_{22}\xi_{22}\xi_{11}+2\xi_{22}\xi_{22}\xi_{12}+2\xi_{22}\xi_2\xi_{12}-\xi_{12}\xi_{22}\xi_{12}-\xi_{12}\xi_2\xi_{12}.
\end{align*}
The generalized harmonic product relation for $\iota_{1\otimes2}^{-1}(\xi_{12}^{(1)}\xi_{11}\xi_{12}^{(1)}\otimes\bnull)$ is
\begin{align*}
&L({}^1z_2{}^11{}^1z_2;z_1)\\
&\qquad =-2\Li_{1,1}(z_2)\Li_1(z_1)+2\Li_{1,1,1}(2,1;z_2,z_1)\\
&\qquad\phantom{=} +2\Li_{1,2}(1,1;z_2,z_1)+L({}^1z_1{}^11;z_2)\Li_1(z_1)\\
&\qquad\phantom{=} +\Li_{1,1}(1,1;z_2,z_1)\Li_1(z_1)-L({}^1z_1{}^11{}^1z_1;z_2)-\Li_{1,2}(0,2;z_2,z_1).
\end{align*}
\end{itemize}

Next, we consider the case of $\varphi=\iota_{1\otimes2}^{-1}(w \otimes \bnull) \in \cB^0$ for
\begin{equation*}
w=\xi_1^{k_1-1}\xi_{11}\cdots\xi_1^{k_i-1}\xi_{11}\xi_1^{k_{i+1}-1}\xi_{12}^{(1)}\cdots \xi_1^{k_r-1}\xi_{12}^{(1)} \in S^0(\xi_1,\xi_{11},\xi_{12}^{(1)}).
\end{equation*}
Put $\varphi_{k_1,\ldots,k_r}(i,r-i)=\iota_{1\otimes2}^{-1}(w \otimes \bnull)$ and $\varphi_{\emptyset}(0,0)=\bnull$.

\begin{prop}\label{prop:phi_Li_z1}
The following identity in $\cB$ holds:
\begin{multline}\label{phi_Li_z1}
\varphi_{k_1,\ldots,k_r}(i,r-i)\\
=\begin{cases}
\xi_{11}\varphi_{k_2,\ldots,k_r}(r-1,0)&(i=r,k_1=1),\\
\xi_1\varphi_{(k_1-1),k_2,\ldots,k_r}(r,0)&(i=r,k_1>1),\\
\xi_{12}\varphi_{k_2,\ldots,k_r}(0,r-1)&(i=0,k_1\!=\!k_{i+1}\!= 1),\\
\xi_{11}\varphi_{k_2,\ldots,k_r}(i-1,r-i)\\
\; +\xi_{22}\varphi_{k_1,\ldots,k_i,k_{i+2},\ldots,k_r}(i,r-i-1)\\
\; -(\xi_{22}+\xi_2)\varphi_{k_1,\ldots,k_i,k_{i+2},\ldots,k_r}(i-1,r-i)&(r>i>0, k_1\!=\!k_{i+1}\!=1),\\
\xi_{11}\varphi_{k_2,\ldots,k_r}(i-1,r-i)\\
\; +\xi_2\varphi_{k_1,\ldots,k_i,(k_{i+1}-1),k_{i+2},\ldots,k_r}(i,r-i)&(r>i>0, k_1\!=1, k_{i+1}\!>\!1),\\
\xi_1\varphi_{(k_1-1),k_2,\ldots,k_r}(i,r-i)\\
\; +\xi_{22}\varphi_{k_1,\ldots,k_i,k_{i+2},\ldots,k_r}(i,r-i-1)\\
\; -(\xi_{22}+\xi_2)\varphi_{k_1,\ldots,k_i,k_{i+2},\ldots,k_r}(i-1,r-i)&(r>i>0, k_1>1, k_{i+1}\!=1),\\
\xi_1\varphi_{(k_1-1),k_2,\ldots,k_r}(i,r-i)\\
\; +\xi_2\varphi_{k_1,\ldots,k_i,(k_{i+1}-1),k_{i+2},\ldots,k_r}(i,r-i)&(r>i\ge 0, k_1\!>\!1, k_{i+1}\!>\!1).
\end{cases}
\end{multline}
\end{prop}

\begin{proof}
We show this by induction on the weight $k=k_1+\cdots+k_r$. If $k=1$, we have
\begin{align*}
\varphi_1(1,0)&=\iota_{1\otimes2}^{-1}(\xi_{11}\otimes\bnull)=\xi_{11}=\xi_{11}\bnull=\xi_{11}\varphi_{\emptyset}(0,0),\\
\varphi_1(0,1)&=\iota_{1\otimes2}^{-1}(\xi^{(1)}_{12}\otimes\bnull)=\xi_{12}=\xi_{12}\bnull=\xi_{12}\varphi_{\emptyset}(0,0).
\end{align*}
Therefore the equation \eqref{phi_Li_z1} holds.

We assume that the equation \eqref{phi_Li_z1} holds for all $l_1+\cdots+l_s<k$. By the definition of $\iota_{1\otimes2}$, we obtain $\iota_{1\otimes2}(\text{LHS})=\iota_{1\otimes2}(\text{RHS})=w\otimes\bunit$ for all cases. Thus it suffices to prove the RHS of \eqref{phi_Li_z1} belongs to $\cB_2 \cB^0_{k-2} \subset \cB^0$ for $k_1+\cdots+k_r=k$. For $i=0, k_1=k_{i+1}=1$, we have
\begin{align*}
\text{RHS}&=\xi_{12}\varphi_{k_2,\ldots,k_r}(0,r-1)\\
&=\begin{cases}
\xi_{12}\xi_{12}\varphi_{k_3,\ldots,k_r}(0,r-2)&(k_2=1),\\
\xi_{12}(\xi_1+\xi_2)\varphi_{(k_2-1),k_3,\ldots,k_r}(0,r-1)&(k_2>1)
\end{cases}\\
&\in \cB_2\cB_{k-2}.
\end{align*}
For other cases, we can show that the RHS of \eqref{phi_Li_z1} belongs to $\cB_2 \cB^0_{k-2}$ in the same fashion.
\end{proof}

\begin{cor}\label{cor:GHPR_i0type}
We have
\begin{equation}
\varphi_{k_1,\ldots,k_r}(0,r)=(\xi_1+\xi_2)^{k_1-1}\xi_{12}\cdots(\xi_1+\xi_2)^{k_r-1}\xi_{12}.
\end{equation}
Moreover the generalized harmonic product relation for $\varphi_{k_1,\ldots,k_r}(0,r)$ is the trivial relation
\begin{equation}
\Li_{k_1,\ldots,k_r}(0,r;z_1,z_2)=\Li_{k_1,\ldots,k_r}(0,r;z_2,z_1).
\end{equation}
\end{cor}

\begin{proof}
Applying Proposition \ref{prop:phi_Li_z1} to $\varphi_{k_1,\ldots,k_r}(0,r)$ recursively, we have the first formula. The second claim follows immediately from the first formula.
\end{proof}

Let $\cMPL=\{\varphi_{k_1,\ldots,k_r}(i,r-i)\} \subset \cB^0$ be a subset of elements such that
\begin{equation*}
\iota_{1\otimes2}^{-1}(\xi_1^{k_1-1}\xi_{11}\cdots\xi_1^{k_i-1}\xi_{11}\xi_1^{k_{i+1}-1}\xi_{12}^{(1)}\cdots \xi_1^{k_r-1}\xi_{12}^{(1)} \otimes \bnull).
\end{equation*}
Set $\cMPL_k=\cMPL\cap\cB_k$. The expression \eqref{phi_Li_z1} defines five maps
\begin{equation*}
\fd_{\mu}: \cMPL_k \to \cMPL_{k-1}
\end{equation*}
for $\mu \in \{1,11,2,22,12\}$ by
\begin{equation*}
\varphi_{k_1,\ldots,k_r}(i,r-i)=\sum_{\mu \in \{1,11,2,22,12\}}\xi_{\mu}\fd_{\mu}(\varphi_{k_1,\ldots,k_r}(i,r-i)).
\end{equation*}

\begin{cor}
\begin{align}
d\Li_{k_1,\ldots,k_r}(i,r-i;z_1,z_2)&=\!\!\!\!\!\sum_{\mu\in \{1,11,2,22,12\}}\xi_\mu\int_{1\otimes2}\iota_{1\otimes2}(\fd_\mu(\varphi_{k_1,\ldots,k_r}(i,r-i))).
\end{align}
\end{cor}

\begin{proof}
It follows immediately from Lemma \ref{lem:2-mpl} and the computation
\begin{multline*}
d\Li_{k_1,\ldots,k_r}(i,r-i;z_1,z_2)\\
=\frac{\partial \Li_{k_1,\ldots,k_r}(i,r-i;z_1,z_2)}{\partial z_1}dz_1+\frac{\partial\Li_{k_1,\ldots,k_r}(i,r-i;z_1,z_2)}{\partial z_2}dz_2.
\end{multline*}
\end{proof}

Now we can express recursively the generalized harmonic product relation \eqref{GHPR} for $\varphi_{k_1,\ldots,k_r}(i,r-i)$ as follows:
\begin{align*}
\int_{2\otimes1}\iota_{2\otimes1}(\varphi_{k_1,\ldots,k_r}(i,r-i))&=\int_{C_{2\otimes1}}\varphi_{k_1,\ldots,k_r}(i,r-i)\\
&=\int_{C_{2\otimes1}}\sum_{\mu}\xi_{\mu}\int_{C_{2\otimes1}}\fd_{\mu}(\varphi_{k_1,\ldots,k_r}(i,r-i))\\
&=\int_{C_{2\otimes1}}\sum_{\mu}\xi_{\mu}\int_{2\otimes1}\iota_{2\otimes1}(\fd_{\mu}(\varphi_{k_1,\ldots,k_r}(i,r-i)))\\
&=\int_{C_{2\otimes1}}\sum_{\mu}\xi_{\mu}\int_{1\otimes2}\iota_{1\otimes2}(\fd_{\mu}(\varphi_{k_1,\ldots,k_r}(i,r-i))).
\end{align*}
The last equation follows from the generalized harmonic product relations for the weight $k_1+\cdots+k_r-1$. Hence we have
\begin{align*}
\int_{2\otimes1}\iota_{2\otimes1}(\varphi_{k_1,\ldots,k_r}(i,r-i))&=\int_{C_{2\otimes1}}\sum_{\mu}\xi_{\mu}\int_{1\otimes2}\iota_{1\otimes2}(\fd_{\mu}(\varphi_{k_1,\ldots,k_r}(i,r-i))) \notag\\
&=\int_{C_{2\otimes1}}d\Li_{k_1,\ldots,k_r}(i,r-i;z_1,z_2).
\end{align*}
On the other hand, we obtain
\begin{equation*}
\int_{1\otimes2}\iota_{1\otimes2}(\varphi_{k_1,\ldots,k_r}(i,r-i))=\Li_{k_1,\ldots,k_r}(i,r-i;z_1,z_2).
\end{equation*}
Therefore the generalized harmonic product relation
\begin{equation*}
\int_{1\otimes2}\iota_{1\otimes2}(\varphi_{k_1,\ldots,k_r}(i,r-i))=\int_{2\otimes1}\iota_{2\otimes1}(\varphi_{k_1,\ldots,k_r}(i,r-i))
\end{equation*}
is equivalent to the recursive relation
\begin{equation}\label{GHPR_recursive}
\Li_{k_1,\ldots,k_r}(i,r-i;z_1,z_2)=\int_{C_{2\otimes1}}d\Li_{k_1,\ldots,k_r}(i,r-i;z_1,z_2).
\end{equation}

Using the fact above, one can calculate the generalized harmonic product relation for $\Li_{k_1,\ldots,k_r}(i,r-i;z_1,z_2)$ recursively without determining a concrete expression of $\varphi_{k_1,\ldots,k_r}(i,r-i)$ as follows:
\begin{enumerate}
\item Write
\begin{equation*}
d\Li_{k_1,\ldots,k_r}(i,r-i;z_1,z_2)=\sum_{\mu \in \{1,11,2,22,12\}}\xi_{\mu}f_{\mu},
\end{equation*}
where $f_{\mu}$ for $\mu \in \{1,11,2,22,12\}$ is a linearly combination of MPL2 with the main variable $z_1$ with the weight $k_1+\cdots+k_r-1$.
\item By using the generalized harmonic product relations for weight $k_1+\cdots+k_r-1$, convert $f_1,\ldots,f_{12}$ to the product of MPL2 with the main variable $z_2$ and MPL1 of $z_1$. We denote the results by
\begin{equation*}
d\Li_{k_1,\ldots,k_r}(i,r-i;z_1,z_2)=\sum_{\mu \in \{1,11,2,22,12\}}\xi_{\mu}g_{\mu}
\end{equation*}
where $g_{\mu}$ for $\mu \in \{1,11,2,22,12\}$ is a linearly combination of the product of MPL2 with the main variable $z_2$ and MPL1 of $z_1$.
\item Compute $\ds \int_{C_{2\otimes1}}\xi_1g_1+\cdots\xi_{12}g_{12}$ as follows. For $\bk=(k_1,\ldots,k_r)$ and $\bl=(l_1,\ldots,l_s)$, we have
\begin{align*}
&\int_{C_{2\otimes1}}\xi_1\Li_{\bk}(i,r-i;z_2,z_1)\Li_{\bl}(z_1)=\begin{cases}0&(\bk\neq \emptyset),\\\Li_{(l_1+1),l_2,\ldots,l_s}(z_1)&(\bk= \emptyset),\end{cases}\\
&\int_{C_{2\otimes1}}\xi_{11}\Li_{\bk}(i,r-i;z_2,z_1)\Li_{\bl}(z_1)=\begin{cases}0&(\bk\neq \emptyset),\\\Li_{1,l_1,\ldots,l_s}(z_1)&(\bk= \emptyset),\end{cases}\\
&\int_{C_{2\otimes1}}\xi_2\Li_{\bk}(i,r-i;z_2,z_1)\Li_{\bl}(z_1)=\Li_{(k_1+1),k_2,\ldots,k_r}(i,r-i;z_2,z_1)\Li_{\bl}(z_1),\\
&\int_{C_{2\otimes1}}\xi_{22}\Li_{\bk}(i,r-i;z_2,z_1)\Li_{\bl}(z_1)=\Li_{1,k_1,\ldots,k_r}(i+1,r-i;z_2,z_1)\Li_{\bl}(z_1),\\
&\int_{C_{2\otimes1}}\xi_{12}g_{12}=\begin{cases}
\Li_{1,k_2,\ldots,k_r}(0,r;z_2,z_1)&(i=0,k_1=1)\\
0&(\text{otherwise}).
\end{cases}
\end{align*}
The first four formulas are derived from the definition of $\iota_{2\otimes1}$ immediately. The fifth equation holds by virtue of Corollary \ref{cor:GHPR_i0type}.
\end{enumerate}

For instance, we calculate the generalized harmonic product for the MPL2 $\Li_{2,1}(1,1;z_1,z_2)$. Since
\begin{multline*}
d\Li_{2,1}(1,1;z_1,z_2)\\
=\xi_1\Li_{1,1}(1,1;z_1,z_2)+\xi_{22}\Li_{2}(1,0;z_1,z_2)-(\xi_{22}+\xi_2)\Li_{2}(0,1;z_1,z_2),
\end{multline*}
we prepare the generalized harmonic product relations for $\Li_{1,1}(1,1;z_1,z_2)$, $\Li_{2}(1,0;z_1,z_2)$ and $\Li_{2}(0,1;z_1,z_2)$. For $\Li_{2}(1,0;z_1,z_2)$ and $\Li_{2}(0,1;z_1,z_2)$, we obtain easily
\begin{align*}
\Li_{2}(1,0;z_1,z_2)&=\Li_{2}(z_1),\\
\Li_{2}(0,1;z_1,z_2)&=\Li_{2}(0,1;z_2,z_1).
\end{align*}
For $\Li_{1,1}(1,1;z_1,z_2)$, we have
\begin{align*}
&\Li_{1,1}(1,1;z_1,z_2)\\
&=\int_{C_{2\otimes1}}d\Li_{1,1}(1,1;z_1,z_2)\\
&=\int_{C_{2\otimes1}}\Big(\xi_{11}\Li_{1}(0,1;z_1,z_2)+\xi_{22}\Li_{1}(1,0;z_1,z_2)\\*[-2ex]
&\phantom{\int_{C_{2\otimes1}}\Big(\xi_{11}\Li_{1}}-(\xi_{22}+\xi_2)\Li_{1}(0,1;z_1,z_2)\Big)\\
&=\int_{C_{2\otimes1}}\Big(\xi_{11}\Li_{1}(0,1;z_2,z_1)+\xi_{22}\Li_{1}(z_1)-(\xi_{22}+\xi_2)\Li_{1}(0,1;z_2,z_1)\Big)\\
&=0+\Li_1(1,0;z_2,z_1)\Li_{1}(z_1)-\Li_{1,1}(1,1;z_2,z_1)-\Li_{2}(0,1;z_2,z_1).
\end{align*}
Thus we obtain
\begin{align*}
&\Li_{2,1}(1,1;z_1,z_2)\\
&=\int_{C_{2\otimes1}}d\Li_{1,1}(2,1;z_1,z_2)\\
&=\int_{C_{2\otimes1}}\Big(\xi_1\Li_{1,1}(1,1;z_1,z_2)+\xi_{22}\Li_{2}(1,0;z_1,z_2)-(\xi_{22}+\xi_2)\Li_{2}(0,1;z_1,z_2)\Big)\\
&=\int_{C_{2\otimes1}}\Big(\xi_1(\Li_1(1,0;z_2,z_1)\Li_{1}(z_1)-\Li_{1,1}(1,1;z_2,z_1)-\Li_{2}(0,1;z_2,z_1))\\*[-2ex]
&\phantom{=\int_{C_{2\otimes1}}\Big(\xi_1\Li_{1,1}}+\xi_{22}\Li_{2}(z_1)-(\xi_{22}+\xi_2)\Li_{2}(0,1;z_2,z_1)\Big)\\
&=0+\Li_1(1,0;z_2,z_1)\Li_{2}(z_1)-\Li_{1,2}(1,1;z_2,z_1)-\Li_{3}(0,1;z_2,z_1).
\end{align*}
Therefore the generalized harmonic product relation for $\Li_{2,1}(1,1;z_1,z_2)$ is 
\begin{equation*}
\Li_{2,1}(1,1;z_1,z_2)=\Li_1(1,0;z_2,z_1)\Li_{2}(z_1)-\Li_{1,2}(1,1;z_2,z_1)-\Li_{3}(0,1;z_2,z_1).
\end{equation*}
This is nothing but the harmonic product of multiple polylogarithms
\begin{equation*}
\Li_{2}(z_1)\Li_1(z_2)=\Li_{2,1}(1,1;z_1,z_2)+\Li_{3}(z_1z_2)+\Li_{1,2}(1,1;z_2,z_1).
\end{equation*}

\vspace{1\baselineskip}

In general, we can show that the generalized harmonic product relations contains the harmonic product of multiple polylogarithms. Namely

\begin{thm}\label{thm:ghpl_harmonic_product}
For
\begin{equation*}
w=\xi_1^{k_1-1}\xi_{11}\cdots\xi_1^{k_i-1}\xi_{11}\xi_1^{k_{i+1}-1}\xi_{12}^{(1)}\cdots \xi_1^{k_r-1}\xi_{12}^{(1)} \in S^0(\xi_1,\xi_{11},\xi_{12}^{(1)}),
\end{equation*}
the generalized harmonic product relations
\begin{equation}\label{GHPR''}
\Li_{k_1,\ldots,k_r}(i,r-i;z_1,z_2)=\int_{2\otimes1}\iota_{2\otimes1}\circ\iota_{1\otimes2}^{-1}(w\otimes\bnull)
\end{equation}
yield relations among multiple polylogarithms of two variables. Moreover $\Z$-linear combinations of these relations contain the harmonic product of multiple polylogarithms.
\end{thm}

\begin{proof}
By the recursive computation of
\begin{equation*}
\int_{2\otimes1}\iota_{2\otimes1}\circ\iota_{1\otimes2}^{-1}(w\otimes\bnull)=\int_{2\otimes1}\iota_{2\otimes1}(\varphi_{k_1,\ldots,k_r}(i,r-i))
\end{equation*}
as above, this is a sum of products of certain MPL2s of main variable $z_2$ and certain MPL1s of $z_1$. Hence \eqref{GHPR''} are relations among multiple polylogarithms of two variables.

In what follows, we prove $\Z$-linear combinations of \eqref{GHPR''}, that is the generalized harmonic product relations for elements of $\Z$-span of $\cMPL$, contain the harmonic product of multiple polylogarithms. Put
\begin{align*}
\cN=\{w\varphi\;|\; &w \text{ is a word of $\xi_1,\xi_{11},\xi_2$ containing at least one $\xi_1$ or $\xi_{11}$,}\\*
&\text{and }\varphi \text{ is an element of $\cMPL$ containing at least one $\xi_{22}$ or $\xi_{12}$}\}.
\end{align*}
For $r>0, s\ge 0$ and $m>0$, applying Proposition \ref{prop:phi_Li_z1} inductively, we obtain
\begin{align}
&\sum_{p=0}^{r-1}\Big(\varphi_{k_1,\ldots,k_{r-p},m,k_{r-p+1},\ldots,k_{r+s}}(r-p,s+p+1) \label{thm_GHPR_HP_proof1}\\*[-3ex]
&\hspace{2cm}+\varphi_{k_1,\ldots,k_{r-p-1},(k_{r-p}+m),k_{r-p+1},\ldots,k_{r+s}}(r-p-1,s+p+1)\Big)\notag\\
&=(\text{terms in $\cN$})\notag\\
&\phantom{=}+\underbrace{\xi_{2}\cdots\xi_{2}}_{m-1\text{ times}}\xi_{22}\varphi_{k_1,\ldots,k_{r+s}}(r,s)-\underbrace{\xi_{2}\cdots\xi_{2}}_{m-1\text{ times}}\xi_{22}\varphi_{k_1,\ldots,k_{r+s}}(0,r+s), \notag
\end{align}
where $(k_1,\ldots,k_0)=\emptyset$. Since each term in $\cN$ has at least one $\xi_1$ or $\xi_{11}$ on the left side of $\xi_2,\xi_{22},\xi_{12}$, we have
\begin{equation*}
\int_{C_{2\otimes1}}(\text{terms in $\cN$})=0.
\end{equation*}
Hence the generalized harmonic product relation for \eqref{thm_GHPR_HP_proof1} is expressed as
\begin{align*}
&\sum_{p=0}^{r-1}\Big(\Li_{k_1,\ldots,k_{r-p},m,k_{r-p+1},\ldots,k_{r+s}}(r-p,s+p+1)\\*[-3ex]
&\hspace{2cm}+\Li_{k_1,\ldots,k_{r-p-1},(k_{r-p}+m),k_{r-p+1},\ldots,k_{r+s}}(r-p-1,s+p+1)\Big)\\
&=\int_{C_{2\otimes1}}\underbrace{\xi_{2}\cdots\xi_{2}}_{m-1\text{ times}}\xi_{22}\varphi_{k_1,\ldots,k_{r+s}}(r,s)-\int_{C_{2\otimes1}}\underbrace{\xi_{2}\cdots\xi_{2}}_{m-1\text{ times}}\xi_{22}\varphi_{k_1,\ldots,k_{r+s}}(0,r+s)\\
&=\int_{C_{2\otimes1}}\underbrace{\xi_{2}\cdots\xi_{2}}_{m-1\text{ times}}\xi_{22}\varphi_{k_1,\ldots,k_{r+s}}(r,s)-\Li_{m,k_1,\ldots,k_{r+s}}(1,r+s;z_2,z_1).
\end{align*}

Therefore the relation
\begin{align}
&\int_{C_{2\otimes1}} \underbrace{\xi_{2}\cdots\xi_{2}}_{m-1\text{ times}}\xi_{22}\Li_{k_1,\ldots,k_{r+s}}(r,s;z_1,z_2) \label{thm_GHPR_HP_proof2}\\
&=\int_{2\otimes1} \iota_{2\otimes1}^{-1}(\underbrace{\xi_{2}\cdots\xi_{2}}_{m-1\text{ times}}\xi_{22}\varphi_{k_1,\ldots,k_{r+s}}(r,s))\notag \\
&= \sum_{p=0}^{r-1} \Big(\Li_{k_1,\ldots,k_{r-p},m,k_{r-p+1},\ldots,k_{r+s}}(r-p,s+p+1;z_1,z_2)\notag \\[-2ex]
&  \hspace{1cm}+\Li_{k_1,\ldots,k_{r-p-1},(k_{r-p}+m),k_{r-p+1},\ldots,k_{r+s}}(r-p-1,s+p+1;z_1,z_2)\Big)\notag \\
& \hspace{0.5cm} +\Li_{m,k_1,\ldots,k_{r+s}}(1,r+s;z_2,z_1) \notag
\end{align}
for $r>0,s \ge 0, m>0$ is the generalized harmonic product relation for some element of $\Z$-span of $\cMPL$.\\

In the last of this proof, we calculate
\begin{equation}\label{thm_GHPR_HP_proof3}
\int_{C_{2\otimes1}}\underbrace{\xi_{2}\cdots\xi_{2}}_{l_1-1\text{ times}}\xi_{22}\cdots\underbrace{\xi_{2}\cdots\xi_{2}}_{l_s-1\text{ times}}\xi_{22}\Li_{k_1,\ldots,k_r}(r,0;z_1,z_2)
\end{equation}
by using \eqref{thm_GHPR_HP_proof2} recursively and show that the result, which is also the generalized harmonic product relations for some element of $\Z$-span of $\cMPL$, is the harmonic product relation \eqref{MPL-HP-RD}. Since
\begin{multline*}
\int_{C_{2\otimes1}}\underbrace{\xi_{2}\cdots\xi_{2}}_{l_1-1\text{ times}}\xi_{22}\cdots\underbrace{\xi_{2}\cdots\xi_{2}}_{l_s-1\text{ times}}\xi_{22}\Li_{k_1,\ldots,k_r}(r,0;z_1,z_2)\\
=\Li_{l_1,\ldots,l_s}(z_2)\Li_{k_1,\ldots,k_r}(z_1)
\end{multline*}
is clear, it suffices to prove that the computation of \eqref{thm_GHPR_HP_proof3} by using \eqref{thm_GHPR_HP_proof2} is equal to the right hand side of \eqref{MPL-HP-RD} up to the generalized harmonic product relation for some element of $\Z$-span of $\cMPL$. We show this by induction on $s$.

For $s=1$, we have
\allowdisplaybreaks
\begin{align*}
&\int_{C_{2\otimes1}}\underbrace{\xi_{2}\cdots\xi_{2}}_{l_1-1\text{ times}}\xi_{22}\Li_{k_1,\ldots,k_r}(r,0;z_1,z_2)\\
&=\sum_{p=0}^{r-1} \Big(\Li_{k_1,\ldots,k_{r-p},l_1,k_{r-p+1},\ldots,k_{r}}(r-p,p+1;z_1,z_2) \\*[-2ex]
&  \hspace{3cm}+\Li_{k_1,\ldots,k_{r-p-1},k_{r-p}+l_1,k_{r-p+1},\ldots,k_{r}}(r-p-1,p+1;z_1,z_2)\Big) \\*
& \hspace{2cm} +\Li_{l_1,k_1,\ldots,k_{r}}(1,r;z_2,z_1)\\
&=\sum_{p=1}^{r} \Li_{k_1,\ldots,k_{p},l_1,k_{p+1},\ldots,k_{r}}(p,r-p+1;z_1,z_2)\\*
&\qquad +\sum_{p=1}^{r}\Li_{k_1,\ldots,k_{p-1},k_p+l_1,k_{p+1},\ldots,k_r}(p-1,r-p+1;z_1,z_2)\\*
&\qquad +\Li_{l_1,k_1,\ldots,k_r}(1,r;z_2,z_1)\\
&=\sum_{p=1}^{r-1} \Li_{k_1,\ldots,k_{p},l_1,k_{p+1},\ldots,k_{r}}(p,r-p+1;z_1,z_2)\\*
&\qquad +\Li_{k_1,\ldots,k_{r},l_1}(r,1;z_1,z_2)\\*
&\qquad +\sum_{p=2}^{r}\Li_{k_1,\ldots,k_{p-1},k_p+l_1,k_{p+1},\ldots,k_r}(p-1,r-p+1;z_1,z_2)\\*
&\qquad +\Li_{k_1+l_1,k_2,\ldots,k_r}(0,r;z_1,z_2)\\*
&\qquad +\Li_{l_1,k_1,\ldots,k_r}(1,r;z_2,z_1).
\end{align*}
This is nothing but the right hand side of the equation \eqref{MPL-HP-RD} for $s=1$ up to 
\begin{equation*}
\Li_{k_1+l_1,k_2,\ldots,k_r}(0,r;z_1,z_2)=\Li_{k_1+l_1,k_2,\ldots,k_r}(0,r;z_2,z_1),
\end{equation*}
which is the generalized harmonic product relation for
\begin{equation*}
\iota_{1\otimes2}^{-1}(\xi_1^{k_1+l_1-1}\xi^{(1)}_{12}\xi_1^{k_2-1}\xi^{(1)}_{12}\cdots\xi_1^{k_r-1}\xi^{(1)}_{12}\otimes\bnull) \in \cMPL.\\
\end{equation*}
\allowdisplaybreaks[0]

For general, we assume that
\begin{align}
\int_{C_{2\otimes1}}&\underbrace{\xi_{2}\cdots\xi_{2}}_{l_2-1\text{ times}}\xi_{22}\cdots\underbrace{\xi_{2}\cdots\xi_{2}}_{l_s-1\text{ times}}\xi_{22}\Li_{k_1,\ldots,k_r}(r,0;z_1,z_2) \label{thm_GHPR_HP_proof4} \\*
=&\sum_{p=1}^{r-1} \Big(\Li_{(k_1,\ldots,k_p,l_2)\cdot((k_{p+1},\ldots,k_r)*(l_3,\ldots,l_s))}(p,\bullet;z_1,z_2) \notag \\*[-3ex]
&\qquad\qquad +\Li_{(k_1,\ldots,k_p,k_{p+1}+l_2)\cdot((k_{p+2},\ldots,k_r)*(l_3,\ldots,l_s))}(p,\bullet;z_1,z_2) \Big) \notag \\*
&\qquad +\Li_{(k_1,\ldots,k_r,l_2,\ldots,l_s)}(r,s-1;z_1,z_2) \notag \\*
&+\sum_{p=2}^{s-1} \Big(\Li_{(l_2,\ldots,l_p,k_1)\cdot((k_2,\ldots,k_r)*(l_{p+1},\ldots,l_s))}(p-1,\bullet;z_2,z_1) \notag \\*[-3ex]
&\qquad\qquad +\Li_{(l_2,\ldots,l_p,k_1+l_{p+1})\cdot((k_2,\ldots,k_r)*(l_{p+2},\ldots,l_s))}(p-1,\bullet;z_2,z_1) \Big) \notag \\*
&\qquad +\Li_{(l_2,\ldots,l_s,k_1,\ldots,k_r)}(s-1,r;z_2,z_1) \notag \\*
&+\Li_{(k_1+l_2)\cdot((k_2,\ldots,k_r)*(l_3,\ldots,l_s))}(0,\bullet;z_2,z_1). \notag
\end{align}
We divide the right hand side of \eqref{thm_GHPR_HP_proof4} and denote it by $f_1(z_1,z_2)+f_2(z_1,z_2)+f_3(z_1,z_2)$,
\begin{align*}
f_1(z_1,z_2)&=\sum_{p=1}^{r-1} \Big(\Li_{(k_1,\ldots,k_p,l_2)\cdot((k_{p+1},\ldots,k_r)*(l_3,\ldots,l_s))}(p,\bullet;z_1,z_2) \notag \\*[-3ex]
&\qquad\qquad +\Li_{(k_1,\ldots,k_p,k_{p+1}+l_2)\cdot((k_{p+2},\ldots,k_r)*(l_3,\ldots,l_s))}(p,\bullet;z_1,z_2) \Big) \notag \\*
&\qquad +\Li_{(k_1,\ldots,k_r,l_2,\ldots,l_s)}(r,s-1;z_1,z_2), \notag \\
f_2(z_1,z_2)&=\sum_{p=2}^{s-1} \Big(\Li_{(l_2,\ldots,l_p,k_1)\cdot((k_2,\ldots,k_r)*(l_{p+1},\ldots,l_s))}(p,\bullet;z_2,z_1) \notag \\*[-3ex]
&\qquad\qquad +\Li_{(l_2,\ldots,l_p,k_1+l_{p+1})\cdot((k_2,\ldots,k_r)*(l_{p+2},\ldots,l_s))}(p,\bullet;z_2,z_1) \Big) \notag \\*
&\qquad +\Li_{(l_2,\ldots,l_s,k_1,\ldots,k_r)}(s-1,r;z_2,z_1), \notag \\
f_3(z_1,z_2)&=\Li_{(k_1+l_2)\cdot((k_2,\ldots,k_r)*(l_3,\ldots,l_s))}(0,\bullet;z_2,z_1).
\end{align*}

Then we have
\allowdisplaybreaks
\begin{align*}
&\int_{C_{2\otimes1}}\underbrace{\xi_{2}\cdots\xi_{2}}_{l_1-1\text{ times}}\xi_{22}\cdots\underbrace{\xi_{2}\cdots\xi_{2}}_{l_s-1\text{ times}}\xi_{22}\Li_{k_1,\ldots,k_r}(r,0;z_1,z_2)\\
&=\int_{C_{2\otimes1}}\underbrace{\xi_{2}\cdots\xi_{2}}_{l_1-1\text{ times}}\xi_{22}\; (\text{RHS of \eqref{thm_GHPR_HP_proof4}})\\
&=\int_{C_{2\otimes1}}\underbrace{\xi_{2}\cdots\xi_{2}}_{l_1-1\text{ times}}\xi_{22}\; f_1(z_1,z_2) + \int_{C_{2\otimes1}}\underbrace{\xi_{2}\cdots\xi_{2}}_{l_1-1\text{ times}}\xi_{22}\; f_2(z_1,z_2)\\
&\hspace{3cm} + \int_{C_{2\otimes1}}\underbrace{\xi_{2}\cdots\xi_{2}}_{l_1-1\text{ times}}\xi_{22}\; f_3(z_1,z_2).\\
\end{align*}
\allowdisplaybreaks[0]

For $f_2(z_1,z_2)$ and $f_3(z_1,z_2)$,
\begin{align*}
\int_{C_{2\otimes1}}&\underbrace{\xi_{2}\cdots\xi_{2}}_{l_1-1\text{ times}}\xi_{22}\; f_2(z_1,z_2)\\
&=\sum_{p=2}^{s-1} \Big(\Li_{(l_1,\ldots,l_p,k_1)\cdot((k_2,\ldots,k_r)*(l_{p+1},\ldots,l_s))}(p,\bullet;z_2,z_1)\\*[-1ex]
&\phantom{+\sum_{p=2}^s\Big(} \quad +\Li_{(l_1,\ldots,l_p,k_1+l_{p+1})\cdot((k_2,\ldots,k_r)*(l_{p+2},\ldots,l_s))}(p,\bullet;z_2,z_1)\Big) \\*
&\phantom{= }+\Li_{l_1,\ldots,l_s,k_1,\ldots,k_r}(s,r;z_2,z_1)
\intertext{and}
\int_{C_{2\otimes1}}&\underbrace{\xi_{2}\cdots\xi_{2}}_{l_1-1\text{ times}}\xi_{22}\; f_3(z_1,z_2)=\Li_{(l_1,(k_1+l_2))\cdot((k_2,\ldots,k_r)*(l_3,\ldots,l_s))}(1,\bullet;z_2,z_1)
\end{align*}
are clear by direct computation. For $f_1(z_1,z_2)$, by using \eqref{thm_GHPR_HP_proof2}, we have
\allowdisplaybreaks
\begin{align*}
&\int_{C_{2\otimes1}}\underbrace{\xi_{2}\cdots\xi_{2}}_{l_1-1\text{ times}}\xi_{22}\; f_1(z_1,z_2)\\
&=\sum_{p=1}^{r-1}\sum_{q=p}^{r-1}\Big(\Li_{(k_1,\ldots,k_p,l_1,k_{p+1},\ldots,k_q,l_2)\cdot((k_{q+1},\ldots,k_r)*(l_3,\ldots,l_s))}(p,\bullet;z_1,z_2)\\*[-2ex]
&\phantom{=+\sum_{p=1}^{r-1}} +\Li_{(k_1,\ldots,k_p,l_1,k_{p+1},\ldots,k_q,k_{q+1}+l_2)\cdot((k_{q+2},\ldots,k_r)*(l_3,\ldots,l_s))}(p,\bullet;z_1,z_2)\Big)\\*[-2ex]
&\phantom{= }+\sum_{p=1}^{r} \Li_{k_1,\ldots,k_p,l_1,k_{p+1},\ldots,k_r,l_2,\ldots,l_s}(p,\bullet;z_1,z_2)\\*
&\phantom{= }+\!\sum_{p=0}^{r-2}\sum_{q=p+1}^{r-1}\!\! \Big(\!\Li_{(k_1,\ldots,k_p,k_{p+1}+l_1,k_{p+2},\ldots,k_q,l_2)\cdot((k_{q+1},\ldots,k_r)*(l_3,\ldots,l_s))}(p,\!\bullet;z_1,z_2)\\*[-2ex]
&\phantom{=+\sum_{p=1}^{r-2}}\!\!\!\!\! +\Li_{(k_1,\ldots,k_p,k_{p+1}+l_1,k_{p+2},\ldots,k_q,k_{q+1}+l_2)\cdot((k_{q+2},\ldots,k_r)*(l_3,\ldots,l_s))}(p,\!\bullet;z_1,z_2)\Big)\\*[-2ex]
&\phantom{= }+\sum_{p=0}^{r-1} \Li_{k_1,\ldots,k_p,k_{p+1}+l_1,k_{p+2},\ldots,k_r,l_2,\ldots,l_s}(p,\bullet;z_1,z_2)\\*
&\phantom{= }+\sum_{p=1}^{r-1}\Big(\Li_{(l_1,k_1,\ldots,k_p,l_2)\cdot((k_{p+1},\ldots,k_r)*(l_3,\ldots,l_s))}(1,\bullet;z_2,z_1)\\*[-2ex]
&\phantom{=\sum_{p=1}^{r-1}}+\Li_{(l_1,k_1,\ldots,k_p,l_2+k_{p+1})\cdot((k_{p+2},\ldots,k_r)*(l_3,\ldots,l_s))}(1,\bullet;z_2,z_1)\Big)\\*[-1ex]
&\phantom{= }+\Li_{l_1,k_1,\ldots,k_r,l_2,\ldots,l_s}(1,r+s-1;z_2,z_1)\\
&=\sum_{p=1}^{r-1} \Big(\Li_{(k_1,\ldots,k_p,l_1)\cdot((k_{p+1},\ldots,k_r)*(l_2,\ldots,l_s))}(p,\bullet,z_1,z_2)\\*[-4ex]
&\phantom{+\sum_{p=1}^{r-1} \Big(} \quad +\Li_{(k_1,\ldots,k_p,l_1+k_{p+1})\cdot((k_{p+2},\ldots,k_r)*(l_2,\ldots,l_s))}(p,\bullet,z_1,z_2)\Big) \\*[-1ex]
&\phantom{= }+\Li_{k_1,\ldots,k_r,l_1,\ldots,l_s}(r,s;z_1,z_2)\\*
&\phantom{= }+\Li_{(l_1,k_1)\cdot((k_2,\ldots,k_r)*(l_{2},\ldots,l_s))}(1,\bullet;z_2,z_1)\\*
&\phantom{= }+\Li_{(l_1+k_1)\cdot((k_2,\ldots,k_r)*(l_2,\ldots,l_s))}(0,\bullet;z_1,z_2).
\end{align*}
\allowdisplaybreaks[0]
The last equation is due to calculations by using identities on $\cIDX$
\begin{align*}
(k_1,\ldots,k_r)&*(l_1,\ldots,l_s)\\
&=\sum_{p=0}^{r-1}\Big((k_1,\ldots,k_p,l_1)\cdot((k_{p+1},\ldots,k_r)*(l_2,\ldots,l_s))\\[-3ex]
&\phantom{=\sum_{p=0}^{r-1}\Big(}+(k_1,\ldots,k_p,k_{p+1}+l_1)\cdot((k_{p+2},\ldots,k_r)*(l_2,\ldots,l_s))\Big)\\
&\qquad +(k_1,\ldots,k_r,l_1,\ldots,l_s),
\end{align*}
which is proved by easy induction on $r$.

Thus we have
\begin{equation*}
\int_{C_{2\otimes1}}\underbrace{\xi_{2}\cdots\xi_{2}}_{l_1-1\text{ times}}\xi_{22}\; (\text{RHS of \eqref{thm_GHPR_HP_proof4}})=\text{RHS of \eqref{MPL-HP-RD}}.\end{equation*}

We have proved the theorem.
\end{proof}

This theorem says that the harmonic product of multiple polylogarithms is regarded as a special case of the generalized harmonic product relations. We note that the generalized harmonic product relations contain the harmonic product for multiple polylogarithms as a proper subset. Indeed, the generalized harmonic product relation for $\iota_{1\otimes2}^{-1}(\xi_{12}^{(1)}\xi_{11}\otimes\bnull)$ is
\begin{align*}
L({}^{1}z_2{}^{1}1;z_1)&=\Li_1(0,1;z_2,z_1)\Li_1(z_1)-\Li_1(z_2)\Li_1(z_1)\\
&\qquad \qquad +\Li_{1,1}(1,1;z_2,z_1)+\Li_{2}(0,1;z_2,z_1),
\end{align*}
which is not the harmonic product of multiple polylogarithms. The hierarchy of the generalized harmonic product relations is interpreted as follows:
\begin{align*}
&\quad\framebox{\text{The generalized harmonic product relations for arbitrary $\varphi \in \cB^0$}}\\[1ex]
\supset &\quad\framebox{$\begin{array}{ll}\text{The generalized harmonic product relations for}\\ \text{$\varphi \in \text{$\Z$-span of }\{\iota_{1\otimes2}^{-1}(w\otimes\bnull)\}$, where $w \in S^0(\xi_1,\xi_{11},\xi^{(1)}_{12})$}\end{array}$}\\[1ex]
\supsetneq &\quad\framebox{\text{The generalized harmonic product relations for $\varphi \in \text{$\Z$-span of }\cMPL$}}\\[1ex]
\supset &\quad \framebox{\text{The harmonic product of multiple polylogarithms}}\end{align*}

\section{The fundamental solution of the KZ equation of two variables and the decomposition}

In this section, we construct the fundamental solution normalized at the origin of KZE2 and show the decomposition theorem of the fundamental solution.\\

Let $\varOmega_0, \varOmega'$ be connection forms defined as
\begin{align}
\varOmega_0&=\xi_1 X_1+\xi_2 X_2 \quad &\text{(singular part of $\varOmega$ at $(0,0)$)},\\
\varOmega'&=\xi_{11} X_{11}+\xi_{22} X_{22}+\xi_{12} X_{12} &\text{(regular part of $\varOmega$ at $(0,0)$).}
\end{align}

\begin{prop}\label{prop:IISol_2KZ}
KZE2 \eqref{2KZeq} has the unique solution $\cL(z_1,z_2)$ satisfying the asymptotic condition $\cL(z_1,z_2)=\hcL(z_1,z_2) z_1^{X_1} z_2^{X_2}$, $\hcL(z_1,z_2)$ is holomorphic at $(0,0)$ and $\hcL(0,0)=\bunit$. This solution is referred to as the fundamental solution normalized at $(0,0)$ and expressed as
\begin{align}
\hcL(z_1,z_2)&=\sum_{s=0}^\infty \hcL_s(z_1,z_2), \label{iterated_integral_form} \\
\hcL_s(z_1,z_2)&=\int_{(0,0)}^{(z_1,z_2)} \left(\ad(\varOmega_0)+\mu(\varOmega')\right)^s(\bnull \otimes \bunit), \notag
\end{align}
where $\ad$ $($resp. $\mu$$)$ stands for the adjoint action $($resp. the left multiplication$)$ for the $\cU(\fX)$-component, that is,
\begin{align*}
\ad(\omega\otimes X)(\varphi\otimes F)&=\omega\varphi \otimes [X,F]\\
(\text{resp. } \mu(\omega\otimes X)(\varphi\otimes F)&=\omega\varphi \otimes XF)
\end{align*}
for $\omega \in \{\xi_1,\ldots,\xi_{12}\},\; \varphi \in \cB,\; X \in \{X_1,\ldots,X_{12}\},\; F \in \cU(\fX)$.

Furthermore, for all $s$, the integrand form $\left(\ad(\varOmega_0)+\mu(\varOmega')\right)^s(\bnull \otimes \bunit)$ belongs to $\cB_s^0 \otimes \cU_s(\fX)$.
\end{prop}

\begin{proof}
If the solution exists, the holomorphic part $\ds \hcL(z_1,z_2)$ satisfies a differential equation
\begin{equation*}
d\hcL(z_1,z_2)=[\varOmega_0,\hcL(z_1,z_2)]+\varOmega'\hcL(z_1,z_2).
\end{equation*}
This is equivalent to the recursive equation 
\begin{equation*}
d\hcL_{s+1}(z_1,z_2)=[\varOmega_0,\hcL_s(z_1,z_2)]+\varOmega'\hcL_s(z_1,z_2)
\end{equation*}
for all $s$. Thus if the iterated integral $\ds \int_{(0,0)}^{(z_1,z_2)}\left(\ad(\varOmega_0)+\mu(\varOmega')\right)^s(\bnull \otimes \bunit)$ is well-defined as a many-valued analytic function on $\cM_{0,5}$, the integral
\begin{equation*}
\hcL_{s}(z_1,z_2)=\int_{(0,0)}^{(z_1,z_2)}\left(\ad(\varOmega_0)+\mu(\varOmega')\right)^s(\bnull \otimes \bunit)
\end{equation*}
determines the unique solution satisfying the asymptotic condition as above. So, for proving this proposition, it is sufficient to prove $\left(\ad(\varOmega_0)+\mu(\varOmega')\right)^s(\bnull \otimes \bunit)$ belongs to $\cB_s^0 \otimes \cU_s(\fX)$.\\

We prove it by induction on $s$. We denote by $P_l$ a map defined by
\begin{equation*}
P_l(\sum_I \omega_{i_1}\cdots\omega_{i_s}\otimes X_I)=\sum_I (\sum_{l=1}^{s-1} \omega_{i_1}\otimes\cdots\otimes\omega_{i_l}\wedge\omega_{i_{l+1}}\otimes\omega_{i_s})\otimes X_I
\end{equation*}
and by $\varOmega^{(s)}=\big(\ad(\varOmega_0)+\mu(\varOmega')\big)^s(\bnull\otimes\bunit)$. For $s=0$ and $1$, it is clear that $\varOmega^{(s)}$ belongs to $\cB_s^0\otimes\cU_s(\fX)$. For $s>1$, since
\begin{equation*}
\varOmega^{(s)}=[\varOmega_0[\varOmega_0,\varOmega^{(s-2)}]]+[\varOmega_0,\varOmega'\varOmega^{(s-2)}]+\varOmega'[\varOmega_0,\varOmega^{(s-2)}]+\varOmega'\varOmega'\varOmega^{(s-2)}
\end{equation*}
and
\begin{multline*}
P_1([\varOmega_0[\varOmega_0,\varOmega^{(s-2)}]])=
P_1([\varOmega_0,\varOmega'\varOmega^{(s-2)}]+\varOmega'[\varOmega_0,\varOmega^{(s-2)}])=P_1(\varOmega'\varOmega'\varOmega^{(s-2)})\\
=0
\end{multline*}
by direct computation, we have $P_1(\varOmega^{(s)})=0$. This implies that $\varOmega^{(s)}$ is an element of $\cB_s^0\otimes\cU_s(\fX)$ by virtue of the hypothesis of the induction.
\end{proof}

\vspace{1\baselineskip}

We restrict KZE2 on the contours $C_{1\otimes2}^{(1)},\; C_{1\otimes2}^{(2)},\; C_{2\otimes1}^{(2)}$ and $C_{2\otimes1}^{(1)}$ which appeared in Section \ref{sec:reduced_bar}. On these contours, KZE2 becomes the following four (generalized) KZ equations of one variable
\begin{alignat}{2}
d_{z_1}G(z_1,z_2)&=\varOmega_{1\otimes2}^{(1)} G(z_1,z_2), \quad &\varOmega_{1\otimes2}^{(1)}&=\xi_1 X_1+\xi_{11} X_{11}+\xi_{12}^{(1)} X_{12}, \label{decomposition_equation_12_1}\\
d_{z_2}G(z_2)&=\varOmega_{1\otimes2}^{(2)} G(z_2), \quad &\varOmega_{1\otimes2}^{(2)}&=\xi_2 X_2+\xi_{22} X_{22}, \label{decomposition_equation_12_2}\\
d_{z_2}G(z_1,z_2)&=\varOmega_{2\otimes1}^{(2)}G(z_1,z_2), \quad &\varOmega_{2\otimes1}^{(2)}&=\xi_2 X_2+\xi_{22} X_{22}+\xi_{12}^{(2)} X_{12}, \label{decomposition_equation_21_2}\\
d_{z_1}G(z_1)&=\varOmega_{2\otimes1}^{(1)} G(z_1), \quad &\varOmega_{2\otimes1}^{(1)}&=\xi_1 X_1+\xi_{11} X_{11}, \label{decomposition_equation_21_1}
\end{alignat}
where $d_{z_1}$ (resp. $d_{z_2}$) stands for the exterior derivation by $z_1$ (resp. $z_2$). The fundamental solution normalized at the origin of each equation is given by
\begin{align*}
\cL_{1\otimes2}^{(1)}&=\hcL_{1\otimes2}^{(1)}\;z_1^{X_1},\\
\cL_{1\otimes2}^{(2)}&=\hcL_{1\otimes2}^{(2)}\;z_2^{X_2},\\
\cL_{2\otimes1}^{(2)}&=\hcL_{2\otimes1}^{(2)}\;z_2^{X_2},\\
\cL_{2\otimes1}^{(1)}&=\hcL_{2\otimes1}^{(1)}\;z_1^{X_1}
\end{align*}
respectively, where each $\hcL_{i\otimes j}^{(k)}$ is a holomorphic function at $z_{k}=0$ and satisfies $\hcL_{i\otimes j}^{(k)}\Big|_{z_{k}=0}=\bunit$.

\begin{prop}[The decomposition theorem of the fundamental solution]\label{prop:decomposition}
The fundamental solution $\cL(z_1,z_2)$ normalized at the origin of KZE2 \eqref{2KZeq} is decomposed to the product of fundamental solutions of one variable equations as
\begin{equation}
\cL(z_1,z_2) =\cL_{1 \otimes 2}^{(1)}\cL_{1 \otimes 2}^{(2)} = \cL_{2 \otimes 1}^{(2)}\cL_{2 \otimes 1}^{(1)}. \label{decomposition}
\end{equation}
\end{prop}

\begin{proof}
We show the decomposition $\cL(z_1,z_2)=\cL_{1 \otimes 2}^{(1)}\cL_{1 \otimes 2}^{(2)}$. Since $\hcL(0,z_2)z_2^{X_2}$ is the fundamental solution normalized at $z_2=0$ of the differential equation \eqref{decomposition_equation_12_2}
\begin{equation*}
d_{z_2}G=(\xi_2X_2+\xi_{22}X_{22})G,
\end{equation*}
we have $\hcL(0,z_2)z_2^{X_2}=\cL_{1 \otimes 2}^{(2)}(z_2)$. Put

\begin{equation*}
G(z_1,z_2)=\cL(z_1,z_2)\left(\cL_{1 \otimes 2}^{(2)}(z_2)\right)^{-1}.
\end{equation*}
Then $G(z_1,z_2)$ satisfies the equation \eqref{decomposition_equation_12_1}
\begin{equation*}
d_{z_1}G(z_1,z_2)=\left(\xi_1 X_1+\xi_{11} X_{11}+\xi_{12}^{(1)} X_{12}\right)G(z_1,z_2).
\end{equation*}
Furthermore, from $[X_1,X_2]=[X_1,X_{22}]=0$, we have
\begin{align*}
G(z_1,z_2)&=\hG(z_1,z_2)z_1^{X_1},\\
\hG(z_1,z_2)&=\hcL(z_1,z_2)\left(\hcL_{1 \otimes 2}^{(2)}(z_2)\right)^{-1}
\end{align*}
and
\begin{equation*}
\hG(0,z_2)=\hcL(0,z_2)\left(\hcL_{1 \otimes 2}^{(2)}(z_2)\right)^{-1}=\cL_{1 \otimes 2}^{(2)}(z_2)\left(\hcL_{1 \otimes 2}^{(2)}(z_2)\right)^{-1}=\bunit.
\end{equation*}
This asymptotic condition says that $G(z_1,z_2)$ is the fundamental solution normalized at $z_1=0$ of \eqref{decomposition_equation_12_1}. Therefore $G(z_1,z_2)=\cL_{1\otimes2}^{(1)}$ holds. Thus we obtain the decomposition $\cL(z_1,z_2)=\cL_{1 \otimes 2}^{(1)}\cL_{1 \otimes 2}^{(2)}$.
\end{proof}

From $[X_1,X_2]=[X_1,X_{22}]=0$, we have the following corollary immediately.
\begin{cor}
\begin{equation}
\hcL(z_1,z_2) =\hcL_{1 \otimes 2}^{(1)}\hcL_{1 \otimes 2}^{(2)} = \hcL_{2 \otimes 1}^{(2)}\hcL_{2 \otimes 1}^{(1)}. \label{decomposition_holo}
\end{equation}
\end{cor}

\section{The iterated integral expression of the fundamental solution}

In this section, we try to express the fundamental solution \eqref{iterated_integral_form} as iterated integrals along the integral contours $C_{1\otimes2}$ and $C_{2\otimes1}$, and discuss the relationship between the decomposition of the fundamental solution and the generalized harmonic product relations.\\

Let $\alpha: \cU(\fX) \to \End(\cU(\fX))$ be an algebra homomorphism defined as
\begin{equation}
\alpha: (X_1,X_{11},X_2,X_{22},X_{12}) \mapsto (\ad(X_1),\mu(X_{11}),\ad(X_2),\mu(X_{22}),\mu(X_{12}))
\end{equation}
and $\alpha(\bunit)=\id_{\cU}$. We note that $\alpha$ is well-defined as a map on $\cU(\fX)$.

We also define the duality maps
\begin{align*}
\theta_{i \otimes j}^{(i)}:&\quad \cU(\C\{X_{i},X_{ii},X_{12}\}) \to S(\xi_{i},\xi_{ii},\xi_{12}^{(i)}),\\
\theta_{i \otimes j}^{(j)}:&\quad \cU(\C\{X_{j},X_{jj}\}) \to S(\xi_{j},\xi_{jj})
\end{align*}
as
\begin{align}
\theta_{i \otimes j}^{(i)}(X_i)&=\xi_{i},\; &\theta_{i \otimes j}^{(i)}(X_{ii})&=\xi_{ii},\; &\theta_{i \otimes j}^{(i)}(X_{12})&=\xi_{12}^{(i)},\\
\theta_{i \otimes j}^{(j)}(X_j)&=\xi_{j},\; &\theta_{i \otimes j}^{(j)}(X_{jj})&=\xi_{jj}. \notag
\end{align}

For $i_1,\ldots,i_k \in \{1,11,2,22,12\}$, we denote by $\cW^0(X_{i_1},\ldots,X_{i_k})$ the set of all words of letters $X_{i_1},\ldots,X_{i_k}$ ended with other than $X_1$ and $X_2$, and by $|W|$ the length of the word $W$. Since $[X_1,X_2]=[X_1,X_{22}]=0$, we remark that
\begin{equation*}
\alpha(W_1W_2)(\bunit)=\alpha(W_1)(\bunit)\alpha(W_2)(\bunit)
\end{equation*}
holds for $W_1 \in \cW^0(X_1,X_{11},X_{12})$ and $W_2 \in \cW^0(X_2,X_{22})$.\\

Now we calculate the fundamental solution $\cL(z_1,z_2)=\hcL(z_1,z_2)z_1^{X_1}z_2^{X_2}$ normalized at the origin by integrating along the contour $C_{1\otimes2}$ and $C_{2\otimes1}$.

\begin{prop}\label{prop:IISol_2KZ_iota}
We have
\begin{multline}\label{IISol_2KZ_iota12}
(\iota_{1\otimes2}\otimes \id_{\cU})((\ad(\Omega_0)+\mu(\Omega'))^s(\bnull\otimes\bunit))\\
=\sum_{\substack{W_1\in \cW^0(X_1,X_{11},X_{12})\\W_2 \in \cW^0(X_2,X_{22})\\|W_1|+|W_2|=s}} \!\!\! \Big(\theta_{1\otimes 2}^{(1)}(W_1)\otimes\theta_{1\otimes 2}^{(2)}(W_2)\Big) \otimes \alpha(W_1W_2)(\bunit)
\end{multline}
and
\begin{multline}\label{IISol_2KZ_iota21}
(\iota_{2\otimes1}\otimes \id_{\cU})((\ad(\Omega_0)+\mu(\Omega'))^s(\bnull\otimes\bunit))\\
=\sum_{\substack{W_2\in \cW^0(X_2,X_{22},X_{12})\\W_1 \in \cW^0(X_1,X_{11})\\|W_2|+|W_1|=s}} \!\!\! \Big(\theta_{2\otimes 1}^{(2)}(W_2)\otimes\theta_{2\otimes 1}^{(1)}(W_1)\Big) \otimes \alpha(W_2W_1)(\bunit).
\end{multline}
\end{prop}

\begin{proof}
We prove the first equation by induction on $s$. For $s=1$, we have
\begin{align*}
(\iota_{1\otimes2}\otimes \id_{\cU})&((\ad(\Omega_0)+\mu(\Omega'))(\bnull\otimes\bunit))\\
&=(\zeta_{11}\otimes\bnull)\otimes X_{11}+(\zeta^{(1)}_{12}\otimes\bnull)\otimes X_{12}+(\bnull\otimes\zeta_{22})\otimes X_{22}\\
&=(\theta_{1\otimes2}^{(1)}(X_{11})\otimes\theta_{1\otimes2}^{(2)}(\bunit))\otimes X_{11}+(\theta_{1\otimes2}^{(1)}(X_{12})\otimes\theta_{1\otimes2}^{(2)}(\bunit))\otimes X_{12}\\
&\phantom{=}\quad +(\theta_{1\otimes2}^{(1)}(\bunit)\otimes\theta_{1\otimes2}^{(2)}(X_{22}))\otimes X_{22}.
\end{align*}

We assume that the equation holds for $s$. By the hypothesis of the induction, we can express $(\ad(\Omega_0)+\mu(\Omega'))^s(\bnull\otimes\bunit)$ as
\begin{equation*}
(\ad(\Omega_0)+\mu(\Omega'))^s(\bnull\otimes\bunit)=\sum_{\substack{W_1,W_2\\|W_1|+|W_2|=s}}\varphi_{W_1,W_2}\otimes\alpha(W_1W_2)(\bunit),
\end{equation*}
where $\varphi_{W_1,W_2}=\iota_{1\otimes2}^{-1}(\theta_{1\otimes 2}^{(1)}(W_1)\otimes\theta_{1\otimes 2}^{(2)}(W_2)) \in \cB^0$. Thus we obtain
\begin{align*}
&(\iota_{1\otimes2}\otimes \id_{\cU})(\ad(\Omega_0)+\mu(\Omega'))^{s+1}(\bnull\otimes\bunit)\\
&=\sum_{\substack{W_1,W_2\\|W_1|+|W_2|=s}}\sum_{i\in\{1,11,2,22,12\}}\iota_{1\otimes2}(\xi_i\varphi_{W_1,W_2})\otimes\alpha(X_iW_1W_2)(\bunit).
\end{align*}
By definition, we have $\iota_{1\otimes2}(\xi_i\varphi_{W_1,W_2})=\theta_{1\otimes2}^{(1)}(X_iW_1)\otimes\theta_{1\otimes2}^{(2)}(W_2)$ for $i=1,11,12$ and $\alpha(X_1\bunit W_2)(\bunit)=0$. On the other hand, if $W_1\neq \bunit$, each term of $\varphi_{W_1,W_2}$ has at least one $\xi_{11}$ or $\xi_{12}$. Thus
\begin{equation*}
\iota_{1\otimes2}(\xi_2\varphi_{W_1,W_2})=\iota_{1\otimes2}(\xi_{22}\varphi_{W_1,W_2})=0
\end{equation*}
for $W_1 \neq \bunit$ holds. Therefore we obtain
\begin{align*}
&(\iota_{1\otimes2}\otimes \id_{\cU})(\ad(\Omega_0)+\mu(\Omega'))^{s+1}(\bnull\otimes\bunit)\\
&=\sum_{\substack{W_1\neq \bunit,W_2\\|W_1|+|W_2|=s}}\big(\theta_{1\otimes2}^{(1)}(X_1W_1)\otimes\theta_{1\otimes2}^{(2)}(W_2)\big)\otimes\alpha(X_1W_1W_2)(\bunit)\\
&\hphantom{=}+\sum_{\substack{W_1,W_2\\|W_1|+|W_2|=s}}\Big(\big(\theta_{1\otimes2}^{(1)}(X_{11}W_1)\otimes\theta_{1\otimes2}^{(2)}(W_2)\big)\otimes\alpha(X_{11}W_1W_2)(\bunit)\\[-4ex]
&\hphantom{=\sum_{\substack{W_1,W_2\\|W_1|+|W_2|=s}}\Big(}\qquad+\big(\theta_{1\otimes2}^{(1)}(X_{12}W_1)\otimes\theta_{1\otimes2}^{(2)}(W_2)\big)\otimes\alpha(X_{12}W_1W_2)(\bunit)\Big)\\
&\phantom{=}+\sum_{\substack{W_2\\|W_2|=s}}\Big(\big(\theta_{1\otimes2}^{(1)}(\bunit)\otimes\theta_{1\otimes2}^{(2)}(X_2W_2)\big)\otimes\alpha(X_2W_2)(\bunit)\\[-4ex]
&\hphantom{=+\sum_{\substack{W_2\\|W_2|=s}}\Big(}\qquad+\big(\theta_{1\otimes2}^{(1)}(\bunit)\otimes\theta_{1\otimes2}^{(2)}(X_{22}W_2)\big)\otimes\alpha(X_{22}W_2)(\bunit)\Big)\\
&=\sum_{\substack{W_1,W_2\\|W_1|+|W_2|=s+1}} \!\!\! \big(\theta_{1\otimes 2}^{(1)}(W_1)\otimes\theta_{1\otimes 2}^{(2)}(W_2)\big) \otimes \alpha(W_1W_2)(\bunit).
\end{align*}

\end{proof}

\begin{cor}\label{cor:decomposition_contour}
The decomposition \eqref{decomposition_holo} of the fundamental solution normalized at the origin
\begin{align*}
\hcL(z_1,z_2)&=\hcL_{1\otimes2}^{(1)}(z_1,z_2)\hcL_{1\otimes2}^{(2)}(z_2)\\
\Big(\text{resp. }\hcL(z_1,z_2)&=\hcL_{2\otimes1}^{(2)}(z_1,z_2)\hcL_{2\otimes1}^{(1)}(z_1)\Big)
\end{align*}
is equal to the iterated integration
\begin{align*}
\hcL(z_1,z_2)&=\sum_{s=0}^\infty \int_{C_{1\otimes2}}\left(\ad(\varOmega_0)+\mu(\varOmega')\right)^s(\bnull \otimes \bunit)\\
\Big(\text{resp. }\hcL(z_1,z_2)&=\sum_{s=0}^\infty \int_{C_{2\otimes1}}\left(\ad(\varOmega_0)+\mu(\varOmega')\right)^s(\bnull \otimes \bunit)\Big)
\end{align*}
along the contour $C_{1\otimes2}$ $($resp. $C_{2\otimes1}$$)$.
\end{cor}

\begin{proof}
From Proposition \ref{prop:IISol_2KZ_iota} and $\alpha(W_1W_2)(\bunit)=\alpha(W_1)(\bunit)\alpha(W_2)(\bunit)$, we have
\allowdisplaybreaks
\begin{align*}
&\int_{C_{1\otimes2}}\left(\ad(\varOmega_0)+\mu(\varOmega')\right)^s(\bnull \otimes \bunit)\\
&=\int_{1\otimes2}(\iota_{1\otimes2}\otimes\id_{\cU})\left(\ad(\varOmega_0)+\mu(\varOmega')\right)^s(\bnull \otimes \bunit)\\
&=\sum_{\substack{W_1\in \cW^0(X_1,X_{11},X_{12})\\W_2 \in \cW^0(X_2,X_{22})\\|W_1|+|W_2|=s}} \int_{1\otimes2}\Big(\theta_{1\otimes 2}^{(1)}(W_1)\otimes\theta_{1\otimes 2}^{(2)}(W_2)\Big) \otimes \alpha(W_1W_2)(\bunit)\\
&=\sum_{\substack{W_1\in \cW^0(X_1,X_{11},X_{12})\\W_2 \in \cW^0(X_2,X_{22})\\|W_1|+|W_2|=s}} \int_{z_1=0}^{z_1}\theta_{1\otimes 2}^{(1)}(W_1)\int_{z_2=0}^{z_2}\theta_{1\otimes 2}^{(2)}(W_2) \alpha(W_1)(\bunit)\alpha(W_2)(\bunit)\\
&=\sum_{s_1+s_2=s}\left(\sum_{\substack{W_1\in \cW^0(X_1,X_{11},X_{12})\\|W_1|=s_1}} \int_{z_1=0}^{z_1}\theta_{1\otimes 2}^{(1)}(W_1)\alpha(W_1)(\bunit)\right)\\*
&\phantom{\sum_{s_1+s_2=s}}\qquad\times  \left(\sum_{\substack{W_2\in \cW^0(X_2,X_{22})\\|W_2|=s_2}} \int_{z_2=0}^{z_2}\theta_{1\otimes 2}^{(2)}(W_2) \alpha(W_2)(\bunit)\right).
\end{align*}
\allowdisplaybreaks[0]
This is nothing but the degree $s$ part of $\hcL_{1\otimes2}^{(1)}(z_1,z_2)\hcL_{1\otimes2}^{(2)}(z_2)$.
\end{proof}

This corollary says that two decompositions in Proposition \ref{prop:decomposition}
\begin{align*}
\hcL(z_1,z_2)=\hcL_{1\otimes2}^{(1)}(z_1,z_2)\hcL_{1\otimes2}^{(2)}(z_2)
\intertext{and}
\hcL(z_1,z_2)=\hcL_{2\otimes1}^{(2)}(z_1,z_2)\hcL_{2\otimes1}^{(1)}(z_1)
\end{align*}
correspond to the choice of integral contours $C_{1\otimes2}$ or $C_{2\otimes1}$ respectively.\\

Finally, we discuss the relationship between the generalized harmonic product relations and the decomposition theorem. For this purpose, we consider the subspace of $\cU(\fX)$ spanned by elements $\alpha(W_1W_2)(\bunit)$ for $W_1 \in \cW^0(X_1,X_{11},X_{12})$, $W_2 \in \cW^0(X_2,X_{22})$.

\begin{prop}\label{prop:cU0}
\begin{enumerate}
\item Two sets
\begin{align*}
&\{\alpha(W_1W_2)(\bunit)\;|\; W_1 \in \cW^0(X_1,X_{11},X_{12}), W_2 \in \cW^0(X_2,X_{22})\},\\
&\{\alpha(W_2W_1)(\bunit)\;|\; W_2 \in \cW^0(X_2,X_{22},X_{12}), W_1 \in \cW^0(X_1,X_{11})\}
\end{align*}
are both linearly independent sets of $\cU(\fX)$.
\item We have
\begin{align*}
&\text{$\C$-span of }\{\alpha(W_1W_2)(\bunit)\;|\; W_1 \in \cW^0(X_1,X_{11},X_{12}), W_2 \in \cW^0(X_2,X_{22})\}\\
&=\text{$\C$-span of }\{\alpha(W_2W_1)(\bunit)\;|\; W_2 \in \cW^0(X_2,X_{22},X_{12}), W_1 \in \cW^0(X_1,X_{11})\}
\end{align*}
as a subspace of $\cU(\fX)$.
\end{enumerate}
\end{prop}

To prove the proposition, we prepare the following lemma. 

\begin{lem}\label{lem:cU0_prepare}
Let $W$ be a word of $X_2,X_{22},X_{12}$. There exists an element $W'$ of $\cU(\C\{X_2,X_{22},X_{12}\})$ such that
\begin{align*}
&X_1\alpha(W)(\bunit)=\alpha(W)(\bunit)X_1+\alpha(W')(\bunit)\\
(\text{resp. } &X_{11}\alpha(W)(\bunit)=\alpha(W)(\bunit)X_{11}+\alpha(W')(\bunit)).
\end{align*}
\end{lem}

\begin{proof}
This can be proved by easy induction on the length of the word $W$. We show the case for $X_1\alpha(W)(\bunit)=\alpha(W)(\bunit)X_1+\alpha(W')(\bunit)$. For $W=\bunit$, we have
\begin{equation*}
X_1\alpha(\bunit)(\bunit)=X_1=\alpha(\bunit)(\bunit)X_1+\alpha(0)(\bunit).
\end{equation*}
We assume that $W=X_2\widetilde{W}$ and the claim holds for $\widetilde{W}$. Since $[X_1,X_2]=0$, we have
\begin{align*}
X_1\alpha(X_2\widetilde{W})(\bunit)&=X_1(X_2\alpha(\widetilde{W})(\bunit)-\alpha(\widetilde{W})(\bunit)X_2)\\
&=X_2X_1\alpha(\widetilde{W})(\bunit)-X_1\alpha(\widetilde{W})(\bunit)X_2\\
&=X_2(\alpha(\widetilde{W})(\bunit)X_1+\alpha(\widetilde{W}')(\bunit))-(\alpha(\widetilde{W})(\bunit)X_1+\alpha(\widetilde{W}')(\bunit))X_2\\
&=X_2\alpha(\widetilde{W})(\bunit)X_1+X_2\alpha(\widetilde{W}')(\bunit)-\alpha(\widetilde{W})(\bunit)X_2X_1-\alpha(\widetilde{W}')(\bunit)X_2\\
&=(X_2\alpha(\widetilde{W})(\bunit)-\alpha(\widetilde{W})(\bunit)X_2)X_1+X_2\alpha(\widetilde{W}')(\bunit)-\alpha(\widetilde{W}')(\bunit)X_2\\
&=\alpha(X_2\widetilde{W})(\bunit)X_1+\alpha(X_2\widetilde{W}')(\bunit)
\end{align*}
where $\widetilde{W}' \in \cU(\C\{X_2,X_{22},X_{12}\})$. We can prove the case for $W=X_{22}\widetilde{W}$ and $W=X_{12}\widetilde{W}$ in a similar way.

\end{proof}

\begin{proof}[Proof of Proposition \ref{prop:cU0}]
\begin{enumerate}
\item We consider the first set.

Let $W_1 \in \cW^0(X_1,X_{11},X_{12})$ and $W_2 \in \cW^0(X_2,X_{22})$. 
From
\begin{equation*}
[X_1,\cW^0(X_2,X_{22})]=0,
\end{equation*}
there exists 
$A_{W_1,W_2}^{(i)} \in \cU(\C\{X_1,X_{11},X_{12}\})$,\; $B_{W_1,W_2}^{(j)} \in \cU(\C\{X_2,X_{22}\})$ such that 
\begin{equation*}
\alpha(W_1W_2)(\bunit)
=W_1W_2+\sum_{\substack{i,j\ge 0\\(i,j)\neq (0,0)}}A_{W_1,W_2}^{(i)} X_1^i B_{W_1,W_2}^{(j)} X_2^j.
\end{equation*}
Since $\cU(\fX)$ is equal to $\cU(\C\{X_1,X_{11},X_{12}\})\otimes \cU(\C\{X_2,X_{22}\})$ as a vector space, the set $\{W_1W_2|W_1 \in \cW^0(X_1,X_{11},X_{12}),\; W_2 \in \cW^0(X_2,X_{22})\}$ is a linearly independent set, so is $\{\alpha(W_1W_2)(\bunit)\}$.
\item For proving the claim, it suffices to prove $\alpha(W_1W_2)(\bunit) \in \text{RHS}$ for $W_1 \in \cW^0(X_1,X_{11},X_{12})$,\; $W_2 \in \cW^0(X_2,X_{22})$. We prove this by induction on $|W_1|$. For $W_1=\bunit$, it is clear. We assume $\alpha(\widetilde{W}_1W_2)(\bunit) \in \text{RHS}$.

If $W_1=X_1\widetilde{W}_1$, we have
\begin{align*}
\alpha(W_1W_2)(\bunit)&=\alpha(X_1\widetilde{W}_1W_2)(\bunit)\\
&=X_1\alpha(\widetilde{W}_1W_2)(\bunit)-\alpha(\widetilde{W}_1W_2)(\bunit)X_1.
\end{align*}
By the hypothesis of the induction, $\alpha(\widetilde{W}_1W_2)(\bunit)$ can be written as
\begin{equation*}
\alpha(\widetilde{W}_1W_2)(\bunit)=\sum_{i}c_i\alpha(W^{(i)}_2W^{(i)}_1)(\bunit)=\sum_{i}c_i\alpha(W^{(i)}_2)(\bunit)\alpha(W^{(i)}_1)(\bunit)
\end{equation*}
where $W^{(i)}_2 \in \cW^0(X_2,X_{22},X_{12})$, $W^{(i)}_1 \in \cW^0(X_1,X_{11})$.

Hence, by virtue of Lemma \ref{lem:cU0_prepare}, we obtain
\begin{align*}
\alpha(W_1W_2)(\bunit)&=\sum_{i}c_iX_1\alpha(W^{(i)}_2)(\bunit)\alpha(W^{(i)}_1)(\bunit)-\sum_{i}c_i\alpha(W^{(i)}_2)(\bunit)\alpha(W^{(i)}_1)(\bunit)X_1\\
&=\sum_{i}c_i(\alpha(W^{(i)}_2)(\bunit)X_1+\alpha(W'^{(i)}_2)(\bunit))\alpha(W^{(i)}_1)(\bunit)\\
&\hspace{1cm}-\sum_{i}c_i\alpha(W^{(i)}_2)(\bunit)\alpha(W^{(i)}_1)(\bunit)X_1\\
&=\sum_{i}c_i\alpha(W^{(i)}_2X_1W^{(i)}_1)(\bunit)+\sum_{i}c_i\alpha(W'^{(i)}_2W^{(i)}_1)(\bunit),
\end{align*}
where $W'^{(i)}_2 \in \cU(\C\{X_2,X_{22},X_{12}\})$. Therefore $\alpha(W_1W_2)(\bunit)$ is an element of \text{RHS}.\\

For $W_1=X_{11}\widetilde{W}_1$, we can prove $\alpha(X_{11}\widetilde{W}_1W_2) \in \text{RHS}$ in the same way. For $W_1=X_{12}\widetilde{W}_1$, the claim is clear. Thus we have proved the proposition.

\end{enumerate}
\end{proof}

We denote by $\cU^0(\fX)$ the subspace of $\cU(\fX)$ appeared in Proposition \ref{prop:cU0} (ii):
\begin{align*}
&\cU^0(\fX)\\
&\;=\text{$\C$-span of }\{\alpha(W_1W_2)(\bunit)\;|\; W_1 \in \cW^0(X_1,X_{11},X_{12}), W_2 \in \cW^0(X_2,X_{22})\}\\
&\;=\text{$\C$-span of }\{\alpha(W_2W_1)(\bunit)\;|\; W_2 \in \cW^0(X_2,X_{22},X_{12}), W_1 \in \cW^0(X_1,X_{11})\}.
\end{align*}
Proposition \ref{prop:cU0} (i) says that the map
\begin{equation*}
\alpha(\bullet)(\bunit):\quad W_1W_2 \mapsto \alpha(W_1W_2)(\bunit)
\end{equation*}
for $W_1 \in \cW^0(X_1,X_{11},X_{12}), W_2 \in \cW^0(X_2,X_{22})$ is a linear isomorphism from 
\begin{equation*}
\text{$\C$-span of }\{W_1W_2\;|\; W_1 \in \cW^0(X_1,X_{11},X_{12}), W_2 \in \cW^0(X_2,X_{22})\}
\end{equation*}
to $\cU^0(\fX)$. Moreover $\cB^0\cong S^0(\xi_1,\xi_{11},\xi^{(1)}_{12})\otimes S^0(\xi_2,\xi_{22})$ holds by virtue of Proposition \ref{prop:decomposition_cB0}. Then we can define an isomorphism of vector spaces
\begin{equation*}
\Theta_{1\otimes2}: \cU^0(\fX)\to \cB^0
\end{equation*}
as
\begin{equation}
\Theta_{1\otimes2}(\alpha(W_1W_2)(\bunit))=\iota_{1\otimes2}^{-1}(\theta_{1\otimes2}^{(1)}(W_1)\otimes\theta_{2\otimes1}^{(2)}(W_2)).
\end{equation}
The isomorphism $\Theta_{2\otimes1}: \cU^0(\fX)\to \cB^0$ is also defined in the same fashion. Under this notation, the equations \eqref{IISol_2KZ_iota12} and \eqref{IISol_2KZ_iota21} can be written as
\begin{align}
(\ad(\Omega_0)+\mu(\Omega'))^s(\bnull\otimes\bunit)&=\hspace{-0.8cm}\sum_{\substack{W_1\in \cW^0(X_1,X_{11},X_{12})\\W_2 \in \cW^0(X_2,X_{22})\\|W_1|+|W_2|=s}} \hspace{-0.8cm} \Theta_{1\otimes2}(\alpha(W_1W_2)(\bunit)) \otimes \alpha(W_1W_2)(\bunit) \label{IISol_2KZ_B_12}\\
&=\hspace{-0.8cm}\sum_{\substack{W_2\in \cW^0(X_2,X_{22},X_{12})\\W_1 \in \cW^0(X_1,X_{11})\\|W_1|+|W_2|=s}} \hspace{-0.8cm} \Theta_{2\otimes1}(\alpha(W_2W_1)(\bunit)) \otimes \alpha(W_2W_1)(\bunit). 
\end{align}
From Proposition \ref{prop:cU0}, one can write uniquely
\begin{equation*}
\alpha(W_1W_2)(\bunit)=\sum_i c_i \alpha(W_2^{(i)}W_1^{(i)})(\bunit)
\end{equation*}
where $c_i \in \C, W_2^{(i)} \in \cW^0(X_2,X_{22},X_{12}), W_1^{(i)} \in \cW^0(X_1,X_{11})$. In this case, we have
\begin{equation}\label{contour_GHPR_prepare}
\Theta_{1\otimes2}(\alpha(W_1W_2)(\bunit))=\sum_i c_i \Theta_{2\otimes1}(\alpha(W_2^{(i)}W_1^{(i)})(\bunit))
\end{equation}
in $\cB^0$. Then we can express the generalized harmonic product relation for the element \eqref{contour_GHPR_prepare} as
\begin{equation}\label{contour_GHPR}
\int_{1\otimes2}\theta_{1\otimes2}^{(1)}(W_1)\otimes\theta_{1\otimes2}^{(2)}(W_2)=\sum_i c_i \int_{2\otimes1}\theta_{2\otimes1}^{(2)}(W_2^{(i)})\otimes\theta_{2\otimes1}^{(1)}(W_1^{(1)}).
\end{equation}
This is nothing but the comparison of the coefficients of $\alpha(W_1W_2)(\bunit)$ on
\begin{equation*}
\int_{C_{1\otimes2}}(\ad(\Omega_0)+\mu(\Omega'))^s(\bnull\otimes\bunit)=\int_{C_{2\otimes1}}(\ad(\Omega_0)+\mu(\Omega'))^s(\bnull\otimes\bunit).
\end{equation*}
On the other hand, since $\Theta_{1\otimes2}$ is an isomorphism, each element of a certain basis of $\cB^0$ is appeared just one time in \eqref{IISol_2KZ_B_12}. From these facts and Corollary \ref{cor:decomposition_contour}, we have the following theorem.

\begin{thm}\label{thm:GHPR_is_decomposition}
The generalized harmonic product relations \eqref{GHPR} are equivalent to the relations comes from the comparison of the coefficients on the decomposition \eqref{decomposition_holo}.
\end{thm}

For example, we calculate the degree two holomorphic part of the fundamental solution $\hcL_2(z_1,z_2)$ along the contour $C_{1\otimes2}$ and $C_{2\otimes1}$. For $C_{1\otimes2}$, we have
\begin{align*}
\hcL_2(z_1,z_2)
&=\Li_{2}(1,0;z_1,z_2) [X_1,X_{11}]
+\Li_{2}(0,1;z_1,z_2) [X_1,X_{12}]\\*
&\quad +\Li_{1,1}(2,0;z_1,z_2) X_{11}^2
+\Li_{1,1}(1,1;z_1,z_2) X_{11} X_{12}\\*
&\quad +L({}^{1}z_2{}^{1}1;z_1) X_{12} X_{11}
+\Li_{1,1}(0,2;z_1,z_2) X_{12}^2\\*
&\quad +\Li_1(1,0;z_1,z_2) \Li_1(z_2) X_{11} X_{22}
+\Li_1(0,1;z_1,z_2) \Li_1(z_2) X_{12} X_{22}\\*
&\quad +\Li_{2}(z_2) [X_2,X_{22}]
+\Li_{1,1}(z_2) X_{22}^2.
\end{align*}
On the other hand, for $C_{1\otimes2}$, we have
\begin{align*}
\hcL_2(z_1,z_2)
&=\Li_{2}(z_1) [X_1,X_{11}]
+\Li_{2}(0,1;z_2,z_1) [X_1,X_{12}]
+\Li_{1,1}(z_1) X_{11}^2\\*
&\quad +\big(-\Li_{2}(0,1;z_2,z_1)-\Li_{1,1}(1,1;z_2,z_1)\\*
&\hspace{4cm}+\Li_1(1,0;z_2,z_1) \Li_1(z_1)\big) X_{11} X_{12}\\*
&\quad +\big(\Li_{2}(0,1;z_2,z_1)+\Li_{1,1}(1,1;z_2,z_1)\\*
&\hspace{1cm}-\Li_1(1,0;z_2,z_1) \Li_1(z_1)+\Li_1(0,1;z_2,z_1) \Li_1(z_1)\big) X_{12} X_{11}\\*
&\quad +\Li_{1,1}(0,2;z_2,z_1) X_{12}^2+\Li_1(1,0;z_2,z_1)\Li_1(z_1) X_{11} X_{22}\\*
&\quad +\big(\Li_{1,1}(1,1;z_2,z_1)+L({}^{1}z_1{}^{1}1;z_2)\big) X_{12} X_{22}\\*
&\quad +\Li_{2}(1,0;z_2,z_1) [X_2,X_{22}]
+\Li_{1,1}(2,0;z_2,z_1) X_{22}^2
\end{align*}
by using the infinitesimal pure braid relation \eqref{IPBR2}.
Thus the coefficient of $X_{11}X_{12}$ of $\hcL(z_1,z_2)$ is
\begin{equation*}
\Li_1(z_2) \Li_1(z_1)=\Li_{1,1}(1,1;z_1,z_2)+\Li_{2}(0,1;z_2,z_1)+\Li_{1,1}(1,1;z_2,z_1)
\end{equation*}
and the coefficient of $X_{12}X_{11}$ is
\begin{multline*}
L({}^{1}z_2{}^{1}1;z_1)\\
=\Li_{2}(0,1;z_2,z_1)+\Li_{1,1}(1,1;z_2,z_1)-\Li_1(z_2)\Li_1(z_1)+\Li_1(0,1;z_2,z_1)\Li_1(z_1).
\end{multline*}
There are nothing but the generalized harmonic product relation for elements $\iota_{1\otimes2}^{-1}(\xi_{11}\xi_{12}^{(1)}\otimes\bnull)$ and $\iota_{1\otimes2}^{-1}(\xi_{12}^{(1)}\xi_{11}\otimes\bnull)$ in $\cB^0$.

\vspace{1cm}

\address

\end{document}